\date{}
\title{Transverse and Legendrian invariants of cables in combinatorial link Floer homology}
\newtheorem{theorem}{Theorem}
\newtheorem{prop}{Proposition}[section]
\newtheorem{cor}{Corollary}[section]
\newtheorem{definition}{Definition}[section]
\newtheorem*{remark}{Remark}
\newcommand{\address}[1]{\gdef\@address{#1}}
\newcommand{\email}[1]{\gdef\@email{\url{#1}}}
\newcommand{\@endstuff}{\par\vspace{\baselineskip}\noindent\small
\begin{tabular}{@{}l}\scshape\@address\\\textit{E-mail address:} \@email\end{tabular}}
\author{Apratim Chakraborty}
\address{Department of Mathematics, Stony Brook University, Stony Brook, NY 11794}
\email{apratim@math.stonybrook.edu}
\begin{document}    
\maketitle
\begin{abstract}
We study the Ozsv\'{a}th-Szab\'{o}-Thurston transverse invariant in combinatorial link Floer homology for certain transverse cables $\mathscr{L}_{p,q}$ of transverse link $L$ in $S^3$. Transverse cables $\mathscr{L}_{p,q}$ are constructed from the grid diagram of $L$. The main result is $\hat{\theta}(\mathscr{L}_{p,q})=0$ if and only if $\hat{\theta}(L)=0$ for $\frac{q}{p}$ sufficiently large. We also prove a similar result for invariants of Legendrian knots. Our proof uses an inclusion map $i$ of certain grid complexes associated to $L$ and $L_{p,q}$. We use these results to generate many infinite families of examples of Legendrian and transversely non-simple topological link types.      
\end{abstract}

\section{Introduction}
Transverse links play an important role in the study of contact structures in $3$-manifolds. However, it's often difficult to distinguish transverse links if they have the same topological link type and self-linking number. If any two transverse representatives in a topological link type with the same self-linking number are isotopic, then the topological link type is called transversely simple otherwise, it is called transversely non-simple. There are well-known examples of transversely simple link types.
 Eliashberg \cite{unknot} proved that unknot is transversely simple. Subsequently, Etnyre \cite{Etnyre} showed that torus knots are transversely simple. First examples of transversely non-simple link types were given by Birman and Menasco \cite{first}. Etnyre and Honda \cite{EtnyreHonda3} showed that $(2,3)$ cable of the $(2,3)$ torus knot was transversely non-simple and provided a classification in that link type. In the same vein, a link type is called Legendrian simple if any two representatives in that link type with same Thurston-Bennequin and rotation numbers are Legendrian isotopic. The classification problem of Legendrian links is a well-researched subject. The first example of Legendrian non-simple knot was given by Chekanov \cite{chek} using Chekanov-Eliashberg DGA which is one of the most powerful tools in the realm of Legendrian knot theory.  \\

The first effective transverse invariant was defined in knot Floer complex by Ozsv\'{a}th, Szab\'{o} and Thurston \cite{theta}. In this paper, we focus on the invariant $\hat{\theta}$ in the hat version of knot Floer homology, although stronger refinements can be obtained by using the properties of the filtered complex. Ng, Ozsv\'{a}th and Thurston \cite{ng} used $\hat{\theta}$ to reprove $(2,3)$ cable example and gave additional examples based on the refinement. Since then, $\hat{\theta}$ has been used quite a number of times to give examples of transversely non-simple link types. V\'{e}rtesi \cite{vv} proved a connected sum property of $\hat{\theta}$ and gave an infinite family of examples. Khandhawit and Ng \cite{kn} provided additional infinite families of examples by studying grid diagrams of certain families of $4$-braids. Baldwin \cite{Bald1} proved comultiplication property of $\hat{\theta}$ and found more infinite families. Those examples involved finding representatives $\mathcal{T}_{1}$ and $\mathcal{T}_{2}$ of a link type $\hat{\theta}(\mathcal{T}_{1}) =0$ and $\hat{\theta}(\mathcal{T}_{2}) \neq 0$. Lisca,  Ozsv\'{a}th, Stipsicz and Szab\'{o} \cite{LOSS} defined a more general version of $\theta$ (called LOSS invariant) for transverse links in contact $3$-manifolds. Ozsv\'{a}th and Stipsicz \cite{natural} showed transverse non-simplicity for a wide family of two-bridge knots by studying naturality properties of LOSS invariant. \\

In this paper, we study cables because they provide a natural avenue to look for examples of transversely non-simple link types, and also they provide interesting insight into some concordance invariants. We obtain grid diagrams for cables $L_{p,q}$ by subdividing the grid of $L$ where $q \geq p ( 2n(L)+ tb(L) )$ where $n(L)$ is the minimum grid number of a grid representing the Legendrian link $L$. The Legendrian and transverse the representatives of the cable corresponding to the constructed grid diagrams will be denoted by $\mathcal{L}_{p,q}$ and $\mathscr{L}_{p,q}$ respectively. In section 2, we will show that this bound on $q$ can be strengthened in individual cases by using some variables of the grid diagram. We prove that non-vanishing of $\hat{\theta}$ of a transverse link $L$ is equivalent to non-vanishing of $\hat{\theta}$ for some transverse representatives of the cable  $\mathscr{L}_{p,q}$.
 
\begin{theorem}\label{mostimp} 
For  $\frac{q}{p}$ sufficiently large, there is a transverse representative $\mathscr{L}_{p,q}$ of the $(p,q)$ cable of a transverse link $L$ such that, $\hat{\theta}( \mathscr{L}_{p,q})=0$ if and only if $\hat{\theta}(L)=0$.
\end{theorem}

This result shows that if non vanishing of $\hat{\theta}$ shows some link type is non-simple then its cables are also non-simple. So it provides new examples of infinitely many transversely non-simple topological link types by taking cables examples in \cite{vv,kn,Bald1}.\\

We can also prove a similar statement about Legendrian cables. We prove the following result about Legendrian invariants $\hat{{\lambda}}^{+}(L)$ and $\hat{{\lambda}}^{-}(L)$ \cite{theta} for a link $L$.

\begin{theorem}\label{chekanov} 
For $\frac{q}{p}$ sufficiently large, there is a Legendrian representative $\mathcal{L}_{p,q}$ of the $(p,q)$ cable of a Legendrian link $L$ such that, $\hat{{\lambda}}^{+}(\mathcal{L}_{p,q})= \hat{{\lambda}}^{-}(\mathcal{L}_{p,q})$ if and only if $\hat{{\lambda}}^{+}(L)=\hat{{\lambda}}^{-}(L)$.
\end{theorem}

As a corollary, we show that certain infinite family of cables of $m(5_{2})$ is Legendrian non-simple. These results about Legendrian cables complement work of LaFountain \cite{lafo1} and Tosun \cite{tosun} that showed Legendrian simplicity is preserved after cabling under certain conditions. Also, it will be interesting to see if all cables of non-simple Legendrian links are non-simple.  \\

The proofs of Theorem \ref{mostimp} and \ref{chekanov}  relies on a more general result about link Floer homology of cables. We consider a fully collapsed grid complex $\mathscr{C}_{{L}_{p,q}}$ [See Definition \ref{definec}] associated to the grid diagram of ${L}_{p,q}$, and we assign a chain complex $p\mathscr{C}$ (obtained by a change of variable from fully collapsed grid complex) to the grid diagram of $L$. Then we observe that there is a natural inclusion $i$ of this complex to the collapsed grid complex of the cable ${L}_{p,q}$. We prove that $i([x^+]) \neq 0$ if $[x^+] \neq 0$.

The paper is organized as follows. In Section 2, we review grid diagrams and give a prescription for generating Legendrian/transverse cables. In Section 3, we review grid homology and discuss the properties of a collapsed grid complex that we will be using. In Section 4, we prove the main theorems and give examples of Legendrian and transversely non-simple link types that can be obtained using those theorems.\\   
	
\date{\textbf{Acknowledgements:} I am grateful to my advisor, Olga Plamenevskaya, for suggesting the project and giving helpful advice throughout preparation. I would like to thank John Etnyre, John Baldwin, Lenny Ng and Jen Hom for helpful comments on the first draft.} 
\pagebreak
\section{Grid Diagrams}

Grid diagrams provide a combinatorial framework for studying Legendrian/transverse links, braids and knot Floer complexes (See \cite{Grid Homology for Knots and Links} for more details). A planar grid diagram $P$ with grid number $n$ is an $n \times n$ grid with squares marked with X’s and
O’s in a way that no square contains both X and O, and each row and each column contains exactly one X and one O. $\mathbb{X}$ will denote the set of squares marked with an X, and $\mathbb{O}$ the ones containing an O. Every planar grid diagram $P$ determines a diagram of an oriented link $L$ in the following way: In each row connect the O-marking to the X-marking, and in each column connect the X-marking to the O-marking with an oriented line segment, such that the vertical segments always pass over the horizontal ones.  We call $P$ a planar grid diagram for $L$. Conversely, every oriented link $L$ can be represented by some planar grid diagram. If $L$ is a link with $l$ components ($L_{1},L_{2},..$ and $L_{l}$), $\mathbb{X}_{i}$ (resp. $\mathbb{O}_{i}$)  denotes X marked squares (resp O marked squares)in $L_{i}$. Then, we can write $\mathbb{X}={\mathbb{X}}_{1} \cup {\mathbb{X}}_{2}\cup .. \cup {\mathbb{X}}_{l}$ and $\mathbb{O}={\mathbb{O}}_{1} \cup {\mathbb{O}}_{2}\cup .. \cup {\mathbb{O}}_{l}$.    \\

To work in Heegaard Floer homology setting, we find it convenient to transfer the diagram to torus $\mathbb{T}$. A toroidal grid diagram can be obtained by identifying the opposite sides of a planar grid diagram $P$: its top boundary segment with its bottom one and its left boundary segment with its right one. The resulting diagram $D$ in torus $\mathbb{T}$ is called a toroidal grid diagram, or simply a grid diagram of the link $L$.\\

\begin{figure}[!tbph]  
\begin{center}
\includegraphics[width=0.30\textwidth]{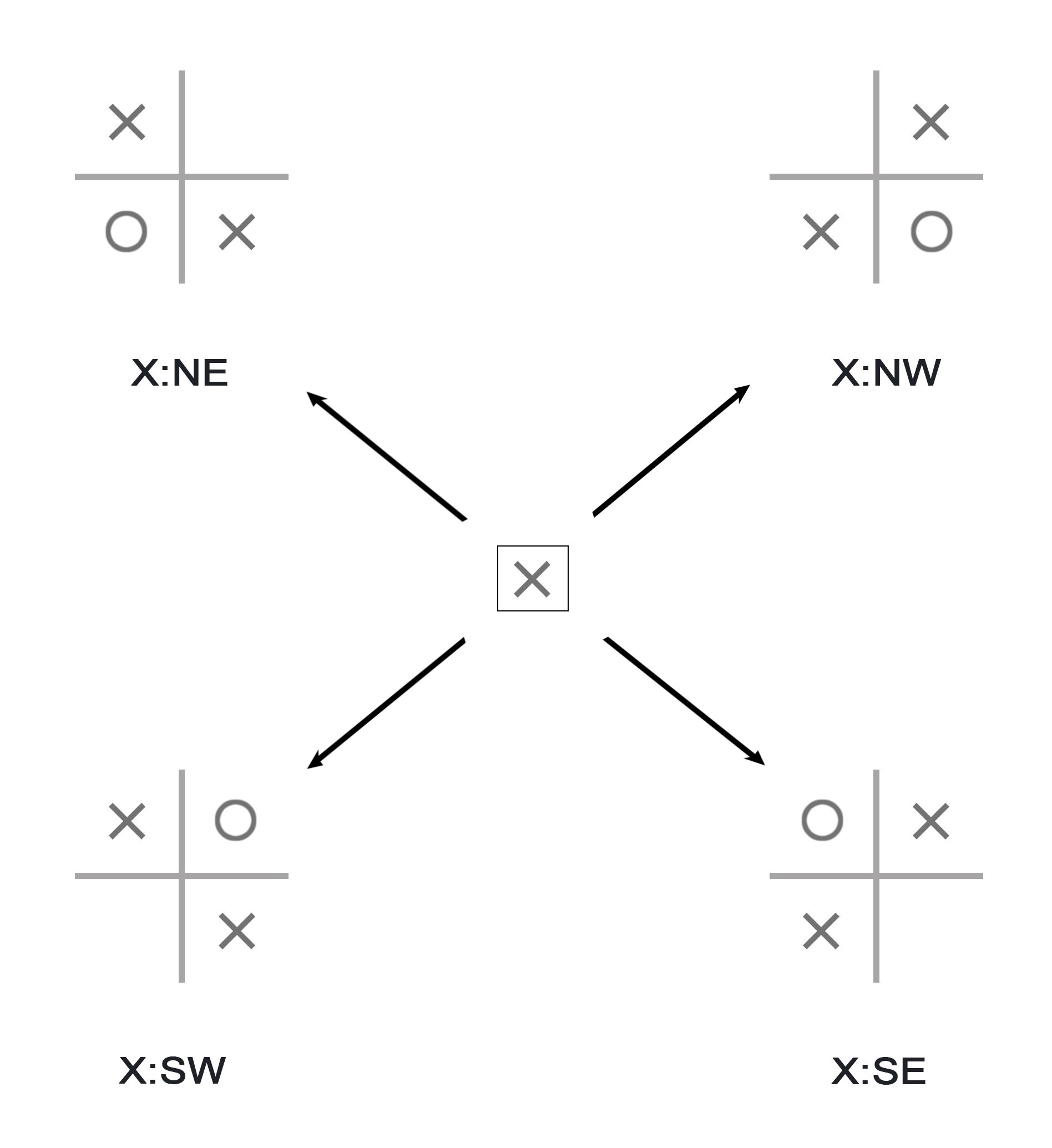}    

\caption{Stabilization Moves}\label{stab}
\end{center}
\end{figure}

There are certain moves of grid diagrams that are the equivalent to Reidemeister moves for knot diagrams. They are commutations of rows or columns and stabilizations. Let $D$ be an $n \times n$ grid diagram. We say that the $(n + 1) \times (n + 1)$ grid diagram $D'$ differs from $D$ by a stabilization (or that $D'$ is the stabilization of $D$), if it can be obtained from $D$ in the following way: choose a marked square in $D$, and erase the marking in it, in the other marked square in its row and in the other marked
square in its column. Then split the row and the column of the chosen marking in $D$ into two, that is, add a new horizontal and a new vertical line to get an ($n+1) \times (n+1)$ grid. There are four ways to insert markings in the two new rows and columns to have
a grid diagram. When the original square is marked with an X, these are called X:NE, X:NW, X:SE and X:SW [See Figure~\ref{stab}]. It turns out that it suffices to consider only these stabilizations for Reidemeister moves.

\subsection{Legendrian and transverse links}

Let us consider the standard tight contact structure $(\mathbb{R}^3,\xi_{st})$, with $\xi_{st}= ker (dz - y dx)$. An oriented link $L \subset \mathbb{R}^3$ is called $\mathbf{Legendrian}$ if it is everywhere tangent to $\xi_{st}$. An oriented link $L \subset \mathbb{R}^3$ is called $\mathbf{transverse}$ if it is everywhere transverse to $\xi_{st}$ and $dz - y dx > 0$ along the orientation. Any smooth link can be perturbed by a $C^0$ isotopy to be Legendrian or transverse. We say that two Legendrian (resp. transverse) links are $\mathbf{Legendrian}$ $\mathbf{isotopic}$ (resp. transversely isotopic) if they are isotopic through Legendrian links (resp. transverse links). We refer the reader to \cite{survey} for a thorough exposition of Legendrian and transverse links.\\

It's convenient to depict a Legendrian link is through its front projection or
projection in the $x-z$ plane. A generic front projection has three features: it has no
vertical tangencies; it is immersed except at cusp singularities; and at all crossings, the
strand of larger slope passes underneath the strand of the smaller slope.  Any front projection with these
features corresponds to a Legendrian link, with the $y$ coordinate given by the formula $y = \frac{dz}{dx}$.\\

The grid diagram $D$ can be viewed as the front projection of a Legendrian link via the following construction. First, we smooth north-west and south-east corners and turn south-west and north-east corners into cusps. Then, to avoid vertical tangencies, we tilt the diagram $45 \degree$ clockwise. Lastly, we reverse all the crossing to ensure the correct crossing convention for a Legendrian front projection [See Figure~\ref{leg}]. It is easy to see that if $D$ is a grid diagram of a link $L$, the Legendrian knot associated to $D$, denoted by $L_{D}$ is the Legendrian representative of $m(L)$. \\

It should be noted that there are couple of conventions for representing Legendrian/transverse links in grid diagram. Here, we adopt the convention of the book Grid Homology for Knots and Links by  Ozsv\'{a}th, Stipsicz and Szab\'{o} \cite{Grid Homology for Knots and Links}.
\begin{prop}\cite{Grid Homology for Knots and Links} Any Legendrian link type can be represented by some toroidal grid diagram. Two toroidal grid diagrams represent the same Legendrian link type if and only if they can be connected by a sequence of commutation and (de)stabilization of types X: NW and X: SE on the torus.
\end{prop}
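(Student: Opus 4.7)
The plan is to follow the standard Cromwell / Dynnikov / Ozsv\'ath--Stipsicz--Szab\'o argument from Grid Homology for Knots and Links. For the existence half, I would begin with any front projection of the given Legendrian link $L$, which after a small Legendrian isotopy may be taken to be piecewise-linear with every segment of slope $\pm 1$; such a replacement does not alter the Legendrian class because away from the cusps the $y$-coordinate, $y = dz/dx$, can be adjusted freely inside a small tube. Rotating this piecewise-linear front $45\degree$ counterclockwise turns every edge into a horizontal or vertical segment, every cusp into a NE or SW corner, and every smooth vertical extremum into a NW or SE corner. After reversing crossings to match the convention that verticals pass over horizontals, and placing the X's and O's at corners compatibly with the link orientation, one obtains a planar grid diagram which, upon identifying opposite sides, gives the desired toroidal grid diagram representing $L$ (up to the mirror convention already noted).

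For the "only if" direction of the equivalence, I would check from the local pictures that a commutation of two non-interleaving adjacent rows or columns induces a planar isotopy of the associated Legendrian front, and that a stabilization of type X:NW or X:SE corresponds to the insertion of a matched pair of cusps whose rotation contributions cancel (equivalently, a Reidemeister-I-type bump that one removes by a Legendrian Reidemeister~II move on the front). In particular, both $tb$ and $rot$ are preserved. By contrast, a stabilization of type X:NE or X:SW introduces a single unpaired cusp and therefore changes $tb$ (and $rot$), so these two types must be excluded for Legendrian equivalence. Each statement is a small local front-diagram verification.

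The "if" direction is where the main work lies. The idea is to use that Legendrian isotopy of front diagrams is generated by three classes of moves: cusp-preserving planar isotopy of the front, Legendrian Reidemeister~II, and Legendrian Reidemeister~III. It then suffices to realize each generator by grid moves from the restricted set. After an initial subdivision of the grid (achieved by auxiliary X:NW and X:SE stabilizations), any cusp-preserving planar isotopy reduces to a finite sequence of commutations; Reidemeister~II appears on the grid as a matched X:NW/X:SE stabilization-destabilization pair; and Reidemeister~III is executed by first stabilizing the local crossing into a larger grid configuration, commuting the strand past, and then destabilizing, with the stabilizations chosen so that only X:NW and X:SE types occur. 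The main obstacle is carrying out this case analysis exhaustively while making sure that at no point is a forbidden X:NE or X:SW stabilization required; since this is worked out carefully in the grid-homology literature, I would cite Chapter~12 of the Ozsv\'ath--Stipsicz--Szab\'o book rather than reproduce the local pictures here.
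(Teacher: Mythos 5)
This statement is quoted with a citation to \emph{Grid Homology for Knots and Links} and the paper supplies no proof of it, so there is no in-paper argument to compare against; your outline is the standard one from that reference and, like the paper, ultimately defers the exhaustive case analysis to it. The architecture you describe --- existence via a piecewise-linear front with slopes $\pm 1$ rotated by $45\degree$, the ``only if'' direction by local verification that the restricted grid moves induce Legendrian isotopies, and the ``if'' direction by realizing planar isotopy of fronts and the Legendrian Reidemeister moves through commutations and X:NW/X:SE (de)stabilizations --- is the correct one.

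However, your local description of the stabilizations is wrong in a way that would break the ``only if'' verification if taken literally. A stabilization of type X:NW or X:SE does not insert a pair of cusps: the new corners it creates are of the northwest/southeast type, which are exactly the corners that get \emph{smoothed} in the passage to the front, so the front changes only by a small planar isotopy near the stabilization (combined with Legendrian Reidemeister moves when the new short segments interact with other strands). Had such a stabilization inserted two cusps with no compensating change in writhe, $tb = wr - \tfrac{1}{2}\#\{\text{cusps}\}$ would drop by $1$, contradicting your own (correct) conclusion that $tb$ is preserved; and a kink removable by a Reidemeister~II move is not the right local model. Dually, X:NE and X:SW stabilizations create a northeast and a southwest corner, i.e.\ \emph{two} new cusps forming a zigzag --- not ``a single unpaired cusp,'' which cannot occur since left and right cusps of a closed front come in equal numbers --- and this zigzag is precisely positive or negative Legendrian stabilization, changing $tb$ by $-1$ and $r$ by $\pm 1$ (which is also why X:SW, but not X:NE, is admitted in the transverse version of the proposition). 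With these local pictures corrected, the remainder of your sketch agrees with the argument in the cited reference.
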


\begin{figure}[!tbph]

\includegraphics[scale=0.13]{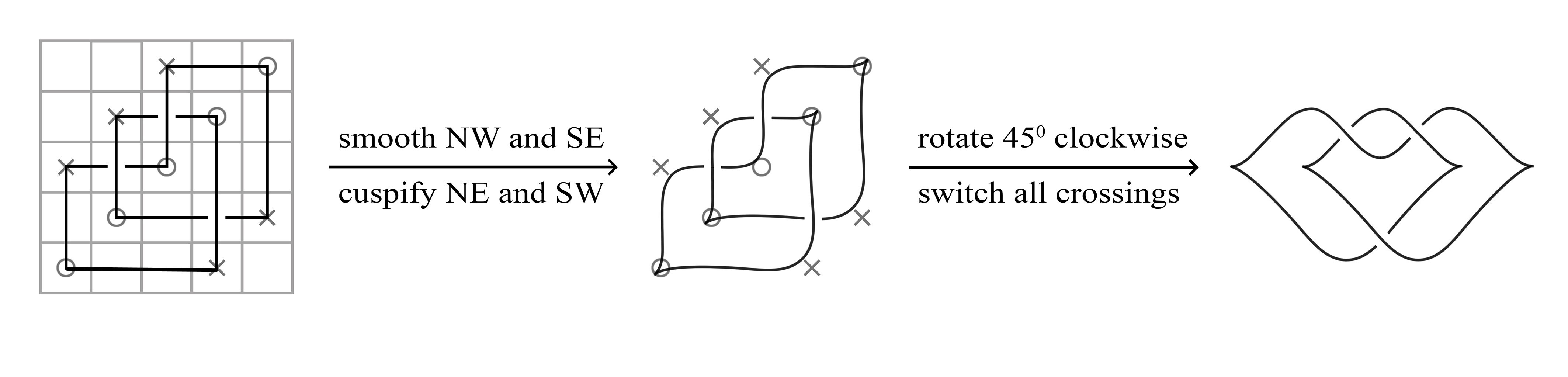}
\caption{ Getting the front projection of a Legendrian link from grid diagram}\label{leg}
\end{figure}

The two main classical invariants of a Legendrian link $L$ are are the $\mathbf{Thurston-Bennequin }  $ $\mathbf{invariant}$ $tb(L)$ and the $\mathbf{rotation}$ $\mathbf{number}$ $r(L)$. They can be easily defined in terms of the front projection $\textit{D}(L)$ of $L$. Let $wr(\textit{D}(L))$ denote the writhe of the projection. Then, \[ tb(L)= wr(\textit{D}(L)) - \frac{1}{2} \# \{ \text{cusps   in}  \textit{D}(L) \} \] and, 
\[r(L)=  \frac{1}{2} \# ( \{  \text{downward-oriented cusps} \} - \{ \text{upward-oriented cusps} \} ) \].

There are also formulas for these invariants  in terms of corners in the grid corresponding to the front projection which will be useful to us. Let $x_{NW}$ (and similarly $x_{SW}$,$x_{SE}$,$x_{NE}$) denote the number of north west (similarly south west, south-east, north-east) X markings, and 
define $o_{NW}$, $o_{SW}$, $o_{SE}$, $o_{SW}$ similarly. Then, 
\begin{equation}\label{eqn1}
r( L_{D}) = \frac{1}{2}(x_{NE}+o_{SW}-x_{SW}-o_{NE})
\end{equation}
and
\begin{equation}\label{eqn2}
tb( L_{D}) = - wr(D) - \frac{1}{2}(x_{NE}+o_{SW}+x_{SW}+o_{NE})
\end{equation}.

The transverse push-off $\mathcal{T}(L)$ of an oriented Legendrian link $L$ is the transverse link type which can be represented by transverse link arbitrarily close to $L$. Also, any transverse link can be represented as a transverse push-off of some Legendrian link. The main classical invariant of a transverse link $L$ is the \textbf{self-linking} \textbf{number} $sl(L)$. For a transverse push-off, $\mathcal{T}(L)$ , $sl(\mathcal{T}(L)) = tb(L) - r(L)$. We can also obtain a correspondence from transverse links to grid diagrams by thinking of the transverse link as a push-off of a Legendrian link $L$ and then taking the grid diagram $D_{L}$ corresponding to $L$.

\begin{prop}\cite{Grid Homology for Knots and Links} Any transverse link type can be represented by some toroidal grid diagram. Two toroidal grid diagrams represent the same transverse link type if and only if they can be connected by a sequence of commutation and (de)stabilization of types X: NW, X: SE and X: SW on the torus.
\end{prop}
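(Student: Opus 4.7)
The plan is to reduce the statement to the Legendrian proposition stated just above, using the classical transverse/Legendrian correspondence. For existence, the discussion of the transverse push-off already shows that every transverse link type is the push-off of some Legendrian link $L$, and that Legendrian link is represented by a toroidal grid $D_L$ by the previous proposition; the same $D_L$ then represents the transverse push-off of $L$, giving the desired grid representative of the original transverse link.

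For the ``if'' direction of the equivalence, I would check that each of the three listed moves preserves the transverse link type: commutations and X:NW, X:SE (de)stabilizations preserve the Legendrian link type by the Legendrian proposition, hence they preserve the transverse push-off; and X:SW (de)stabilization corresponds to a negative Legendrian stabilization, which preserves $sl = tb - r$ and produces the same transverse push-off as the unstabilized Legendrian. Concretely, using the tilt-and-crossing-reversal recipe from Figure~\ref{leg} together with the formulas (\ref{eqn1}) and (\ref{eqn2}), I would compute directly that an X:SW stabilization decreases $tb$ by $1$ and $r$ by $1$, matching the effect of inserting a downward zig-zag into the front projection. The analogous computation identifies X:NE with a positive stabilization, which changes $sl$ by $-2$ and is therefore correctly omitted from the move list.

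For the ``only if'' direction, let $D_1$ and $D_2$ be grids representing the same transverse link $T$. Their Legendrian knots $L_{D_1}, L_{D_2}$ both have transverse push-off $T$. By the classical theorem of Epstein--Fuchs--Meyer (refined by Eliashberg--Fraser), two Legendrian links share a transverse push-off if and only if they become Legendrian isotopic after finitely many negative Legendrian stabilizations. Using the calculation of the previous paragraph in reverse, each negative stabilization can be realized at the grid level by a single X:SW (de)stabilization, possibly preceded by a commutation that brings the appropriate X--marking into the square on which we want to stabilize. This produces grids $D_1'$ and $D_2'$ related to $D_1$ and $D_2$ by commutations and X:SW (de)stabilizations, with $L_{D_1'}$ and $L_{D_2'}$ Legendrian isotopic. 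The Legendrian proposition then connects $D_1'$ and $D_2'$ by commutations and X:NW, X:SE (de)stabilizations, and concatenating the two chains yields the desired sequence of moves from $D_1$ to $D_2$.

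The main obstacle is the geometric identification in the previous paragraph: one must show that the X:SW grid stabilization is precisely a \emph{negative} Legendrian stabilization, not merely some operation that preserves $sl$. This requires carefully tracking how the smoothings, cusps, $45^\circ$ tilt, and crossing reversals interact with the newly inserted $2 \times 2$ block, and confirming both from the front picture and from the corner counts in (\ref{eqn1})--(\ref{eqn2}) that a downward zig-zag is produced. Once this local computation is in hand, the rest of the argument is an essentially formal translation between the Epstein--Fuchs--Meyer statement and grid moves.
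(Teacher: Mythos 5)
The paper offers no proof of this proposition; it is quoted verbatim from the cited reference \cite{Grid Homology for Knots and Links}, so there is nothing internal to compare against. Your argument is essentially the standard proof given in that source and in \cite{theta}: existence via the transverse push-off of a Legendrian grid representative, the ``if'' direction by checking that X:SW is a negative Legendrian stabilization (hence preserves the push-off), and the ``only if'' direction by the Epstein--Fuchs--Meyer/Etnyre--Honda theorem that Legendrian representatives with transversely isotopic push-offs become Legendrian isotopic after negative stabilizations, each realizable by an X:SW grid stabilization (any such stabilization suffices, since negative stabilization is well defined up to Legendrian isotopy, so the commutation you insert is not even needed). The outline is correct; the only substantive verification, which you rightly single out, is the local identification of the X:SW move with negative (rather than positive) stabilization under the tilt-and-mirror convention relating grids to fronts.
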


\subsection{Grid diagrams for Cables} \label{constructed_grids}

The $(p,q)$- cable of a link $L$, denoted ${L}_{p,q}$ , is the satellite link with pattern
the $(p,q)$-torus knot $T_{p,q}$ (where $p$ indicates the longitudinal winding and $q$ indicates the meridional winding) and companion $L$. So, we can think of ${L}_{p,q}$ as the topological type of a link supported on the boundary of a tubular neighbourhood of $L$ with slope $\frac{p}{q}$ with respect to the standard framing of the torus, where the longitude is determined by the Seifert framing for $L$.  

\begin{figure}[!tbph]
\begin{center}
\includegraphics[scale=0.08]{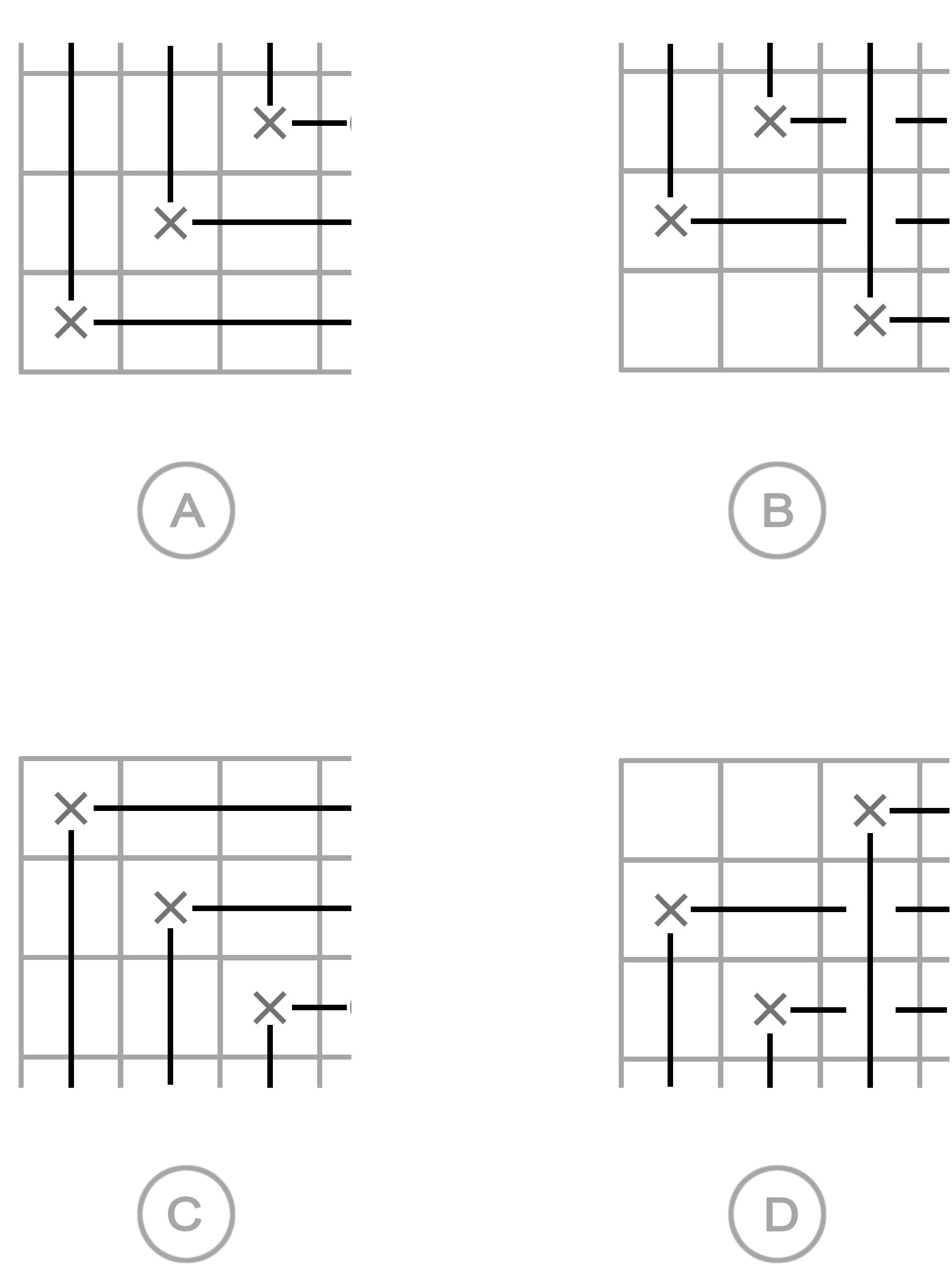}
\caption{Blocks A and C have markings on diagonal. B and D have $p-1$ markings on a diagonal and the last one in a corner}\label{four}
\end{center}

\end{figure}

Given a grid $D$ of a link $L$, we can construct grids of $p$-cables from the grid by transforming a single square to a $p \times p$ block, so that, empty squares are transformed to empty blocks and marked squares are transformed into certain types of (A, B, C and D  as shown in the Figure ~\ref{four})  of blocks. Now to ensure that we get a cable, we need to restrict allowed block types for certain corners. For top right and bottom left corner corners we use blocks A or B. For top left and bottom right corners we use block C or D. \textbf{ For proving our key theorem, all X marked squares, O marked South-West and O marked North-East corners, will be replaced by blocks of type A}. However, then we may not get a cable for $p>2$. We will rectify this situation in ~\ref{entwo}.   \\

Using the prescription, we can obtain the grids $D_p$ of the cable of topological link type ${L}_{p,q}$. The cabling parameter with respect to the blackboard framing of the constructed cables will be denoted by $q_B$. Here, $q$ and $q_B$ are related by $q=p \cdot wr(D) + q_B$. \\

\begin{figure}
\begin{center}

\includegraphics[scale=0.50]{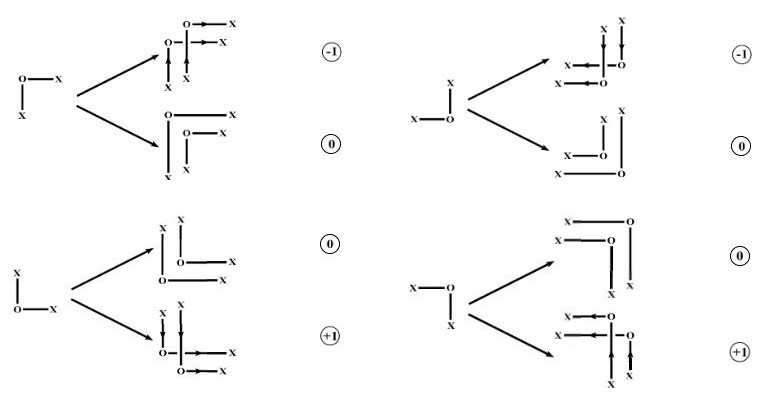}
\end{center}

\caption{Contributions to cabling coefficient for different block types and corners}\label{twists}

\end{figure}

In the grid diagram $q_B$ can be computed by summing up contribution from each block. In a single block the contribution to the cabling coefficient depends both on the corner type and block type. Figure  \ref{twists} computes all possible local contributions for $2$-cables.  \\

 The Legendrian and transverse link types represented by the grid may depend on the choice of the blocks. $\mathcal{L}_{p,q}$ ($\mathscr{L}_{p,q}$) will denote the Legendrian (transverse) link types resulting from the construction. We will show in Proposition \ref{classinvs} that classical invariants are independent of the choices of blocks and grid diagram of $L$.  Now we will determine for what values of cabling parameter $q$, we can obtain  Legendrian/transverse representatives of ${L}_{p,q}$. We first examine the case $p=2$ and then we will extend the results for $p>2$.  

\begin{remark}
It is possible to make arbitrary choices of blocks to get the grid of certain satellites. However, it is not so clear what kind of satellite we can obtain from this construction.
\end{remark}

\subsubsection{Generating 2-cables} 

For 2-cables, we will only have blocks of type A and C. We will try to use stabilizations to obtain a wide range of twisting coefficients. 

\begin{figure}[h!] 
\centering 
\includegraphics[width=0.35\textwidth]{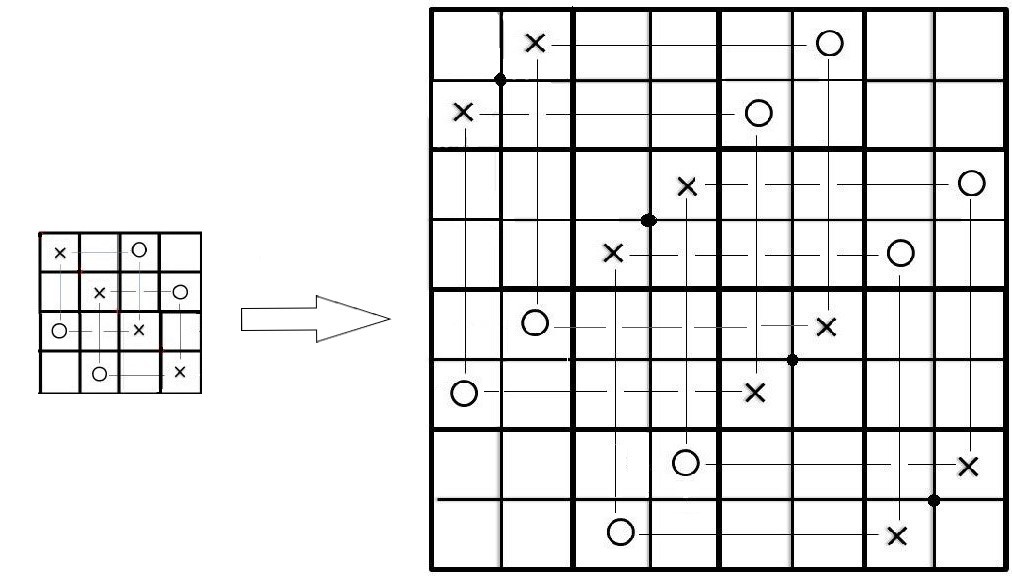}    
\caption{Generating grid of a 2 cable}
\end{figure}

\begin{prop}\label{toptwist}

The grid $D_{2}$ can be constructed from grid $D$, link $L$, by following the above procedure to represent a cable link  $L_{2,q}$  if $ 2 wr(D) - x_{SE} - x_{NW} \geq q \geq 2 wr(D) - o_{SE}- o_{NW} - x_{SE} - x_{NW}$. 

\end{prop}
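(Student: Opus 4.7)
The goal is to determine exactly which integers $q$ arise as the cabling coefficient of a $2$-cable obtained from $D$ (possibly after stabilizing first) via the block-replacement construction. My plan is to compute the linking number between the two components of the would-be cable directly from $D_2$ and then compare it to the Seifert framing of $L$.

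First, I would handle the ``bulk'' of the grid: away from marked corners, every crossing of $D$ lifts to four crossings among the two doubled strands in $D_2$. Summing signs, this contributes $2\, wr(D)$ to the linking number of the two cable components and gives the $2\, wr(D)$ term that appears in both bounds of the proposition.

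Second, I would analyze the marked squares using Figure~\ref{twists}. At each corner the block choice (A or C) either adds or omits a pair of crossings between the two strands, and the figure records the resulting contribution to the framing. The constraint that X-marked squares must be replaced by type A blocks rigidifies the X-corners: I expect to read off that X-corners of types NW and SE each contribute $-1$ unconditionally, which is why the terms $-x_{NW}$ and $-x_{SE}$ appear in both the upper and lower bounds. The O-corners, by contrast, genuinely admit a binary choice; case analysis of the four O-corner orientations should show that NE and SW O-corners contribute either $0$ or $+1$ (giving the $+o_{NE}+o_{SW}$ term in the upper bound), while NW and SE O-corners contribute either $0$ or $-1$ (giving the $-o_{NW}-o_{SE}$ term in the lower bound). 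Maximizing (resp.\ minimizing) every O-corner contribution then produces the claimed endpoints.

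Finally, to realize the intermediate integer values in the interval—and to reach all choices encoded by different block selections—I would observe that each X:NE, X:SW, X:NW, or X:SE stabilization performed on $D$ before cabling adjusts exactly one of the corner counts $x_*,o_*$ by one and shifts the resulting cabling coefficient by a controlled amount consistent with Figure~\ref{twists}. Iterating these, the full integer interval is swept out.

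The main obstacle will be the bookkeeping in Step~2: verifying for each of the eight (marking type)$\times$(corner orientation) combinations that the framing contribution is exactly what Figure~\ref{twists} predicts, and confirming that converting from the grid's blackboard-type framing to the Seifert framing of $L$ leaves the formula in the stated form. A secondary subtlety will be ensuring that every $q$ in the interval is attained, not merely its endpoints, by exhibiting an explicit stabilization recipe.
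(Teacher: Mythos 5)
Your proposal takes essentially the same route as the paper: the paper's proof is a terse bookkeeping argument that tallies the framing contribution of each (marking, corner-orientation, block-choice) combination from Figure~\ref{twists} together with the signs of crossings, which is exactly the case analysis you lay out, and the contributions you identify ($-1$ unconditionally for NW/SE X-corners, $0$ or $+1$ for NE/SW O-corners, $0$ or $-1$ for NW/SE O-corners, $2\,wr(D)$ from the doubled crossings) match the stated bounds. One small correction to your final step: stabilization is not needed here and is not part of ``the above procedure'' (it enters only in Propositions \ref{legcon2} and \ref{trans} to extend the range of $q$); the intermediate integer values of $q$ are already swept out by independently varying the block choice at each O-corner, since each such choice shifts $q$ by exactly one.
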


\begin{proof}
To determine the cabling coefficient, we need to keep track of contribution for each type of corners and block type [See Figure~\ref{twists}] as well as signs of crossings. If we replace each markings by blocks of type $A$, we get $q_B= - o_{SE}- o_{NW} - x_{SE} - x_{NW}$. Therefore, $q= 2 wr(D)+q_B = 2 wr(D) - o_{SE}- o_{NW} - x_{SE} - x_{NW}$ which is the lower bound. Then, by replacing O-SE and O-NW corners by blocks of type C,  we get the upper bound.
\end{proof}

\begin{prop}\label{legcon2}
  If $D$ represents a Legendrian link $L$, then we can make certain stabilizations in $D$ followed by appropriate choices of blocks to construct a grid $D'_{2}$ representing a Legendrian link $\mathcal{L}_{2,-q}$ as long as $q \leq 2 wr(D) - x_{SE} - x_{NW}$.
\end{prop}

\begin{proof}

First, we realize that mirroring changes the sign of $q$ in the represented Legendrian link. 

Since $o_{SE}+ x_{SE} \geq 1$ for any grid and X:SE and O:SE stabilizations don't change the Legendrian link type; we can carry out the procedure of replacing a square by blocks after performing repeated stabilizations on those corners (X:SE stabilization on X:SE corners and O:SE stabilization on O:SE corners) to decrease $q$ by any arbitrary number. 

\end{proof}

\subsubsection{Generating $p$-cables for $p>2$}\label{entwo}
When $p>2$, using block A for NW and SE X corners induces a half full twist in the satellite. To get an integer value of the twisting parameter, we need to perform a stabilization on those X corners in the grid diagram $D$  before replacing the X marked squares by block A [See Figure~\ref{nCab}]. Let $D'$ denote the grid diagram obtained from $D$ using stabilizations. $D'$ represents the same Legendrian link type $L$ since these stabilizations don't change the underlying Legendrian link type. Again for O markings; we are allowed to use any blocks for these types of corners. There are obvious extensions of the results in the previous section. First, we state the extension of Proposition \ref{toptwist} -

\begin{figure}
\begin{center}

\includegraphics[scale=0.60]{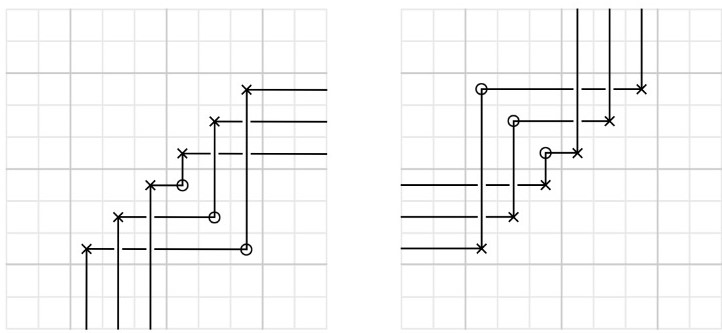}
\end{center}

\caption{Applying X:SE stabilization for  X:SE corners and X:NW stabilization for X:NW corners}\label{nCab}

\end{figure}

\begin{prop}\label{nleg}

The grid $D_{p}$ can be constructed from grid $D$, representing  link $L$, by following the above procedure to represent a cable link  ${L}_{p,q}$ if $ p(wr(D) - x_{SE} - x_{NW}) \geq q \geq p(wr(D) - x_{SE} - x_{NW}) - o_{SE}- o_{NW} $. 

\end{prop}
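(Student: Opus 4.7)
The proof plan is to mirror the argument of Proposition~\ref{toptwist}, now bookkeeping the contributions of each corner type and block choice to the cabling coefficient while accounting for the forced preparatory stabilizations at X:NW and X:SE corners described in~\ref{entwo}. First I would record the base contribution from crossings: replacing a crossing square of $D$ by a $p\times p$ block of empty subsquares produces $p^{2}$ parallel crossings of the same sign as the original, which assemble into $p\cdot wr(D)$ units of twisting when we compute the linking number of a single cable strand against a Seifert-framed pushoff of $L$.

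Next I would process the X corners using Figure~\ref{twists}. At X:NE and X:SW corners, block A contributes integer twisting that combines cleanly with the crossing count without any correction. At X:NW and X:SE corners, however, block A alone would produce a half twist in the satellite; the recipe of~\ref{entwo} (Figure~\ref{nCab}) stabilizes once at each such corner before installing block A, and each stabilization removes exactly $p$ units of cabling slope. Summing over such corners contributes the term $-p(x_{NW}+x_{SE})$, which together with the crossing count gives the base value $p(wr(D)-x_{SE}-x_{NW})$.

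Finally I would enumerate the remaining freedom, which lies entirely at the O corners. At each NE/SW O corner one chooses block A or block B, changing $q$ by $+1$, and at each NW/SE O corner one chooses block C or block D, changing $q$ by $-1$ (again by Figure~\ref{twists}). Making the maximal choice at every O corner yields the upper bound $p(wr(D)-x_{SE}-x_{NW})+o_{SW}+o_{NE}$, making the minimal choice yields the lower bound $p(wr(D)-x_{SE}-x_{NW})-o_{SE}-o_{NW}$, and every integer in between is realized by an appropriate intermediate combination.

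The main obstacle I anticipate is the bookkeeping around the stabilization step: one must check that after the X:NW or X:SE stabilization, inserting block A indeed gives the claimed integer twist contribution and does not alter the underlying cable pattern. Once this normalization is verified, the rest of the argument reduces to linearly combining corner counts in direct analogy with the $p=2$ case of Proposition~\ref{toptwist}.
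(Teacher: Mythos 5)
Your proposal is correct and follows the same corner-by-corner bookkeeping that the paper itself uses for Proposition~\ref{toptwist}; the paper in fact presents Proposition~\ref{nleg} as an ``obvious extension'' with no further argument, so your write-up is if anything more detailed than the original. The only point to tighten is the attribution of the $-p(x_{NW}+x_{SE})$ term: the net $-p$ per X:NW or X:SE corner arises from the pair of half-twists (each worth $-p/2$) present after the preparatory stabilization and the block-A replacement taken together, rather than from the stabilization alone --- which is precisely the normalization check you flag at the end.
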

\begin{proof}
We first applying X:SE stabilization for  X:SE corners and X:NW stabilization for X:NW corners before block replacement. Each X: SE and X:NW corner contributes $-p$ to $q_B$ [Figure~\ref{nCab}]. Using type D blocks for O:SE and O:NW corners yields $-1$ contribution from each of them and gives us the lower bound. If we use blocks of type C instead for those corners, we get the upper bound.
\end{proof}

For Legendrian links, we are able to obtain the following class of cables.\\ 

\begin{prop}
  If $D$ represents a Legendrian link $L$, then we can construct a grid $D'_{p}$ representing a Legendrian link $\mathcal{L}_{p,q}$ as long as $q \geq p( 2n(L) + tb(L) )$ where $n(L)$ is the minimum grid number of a gid representing the Legendrian link $L$.
\end{prop} \label{construct}
\begin{proof}
We get the upper bound for $q$ from   Proposition \ref{nleg}. We can make a torus translation to make sure the grid has at least one SE or NW O corner. Then we can apply  O-SE stabilizations or O-NW stabilizations to decrease the value of $q$ while keeping the Legendrian link type intact. In the constructed grid $D'_{p}$ representing the Legendrian link $\mathcal{L}_{p,q}$, we need to switch the sign of $q$ as it is obtained by mirroring. This way, we can construct $\mathcal{L}_{p,q}$ for $q \geq p (  -wr(D) + x_{SE} + x_{NW})= p [  tb(L) + \frac{1}{2}(x_{NE}+o_{SW}+x_{SW}+o_{NE}) + x_{SE} + x_{NW} ] $. Now, assuming we started with a grid diagram with minimum grid number $n(L)$, we get an upper bound $ \frac{1}{2}(x_{NE}+o_{SW}+x_{SW}+o_{NE}) + x_{SE} + x_{NW} < 2n(L)$.
\end{proof}

As a corollary, we can also construct grid diagrams of transverse cables with the same bounds.
\begin{cor}\label{trans}
  If $D$ represents a transverse link $L$, then we can make certain stabilizations in $D$ followed by appropriate choices of blocks to construct a grid $D'_{P}$ representing a transverse link $\mathscr{L}_{p,q}$ as long as $q \geq p( 2n(L) + tb(L) )$.
\end{cor}

\begin{remark}
The lower bound that we obtained for $\frac{q}{p}$ is a very weak approximation but it will allow us to concretely list the cables that are transversely non-simple in Section 4.1. In practice, one may as well work with non minimal grid diagrams as in Figure \ref{untb}. This maybe particularly useful while dealing with braids.
\end{remark}
Now, we will determine the Thurston-Bennequin and rotation numbers of the constructed Legendrian cables. 

\begin{figure}
\begin{center}

\includegraphics[scale=0.12]{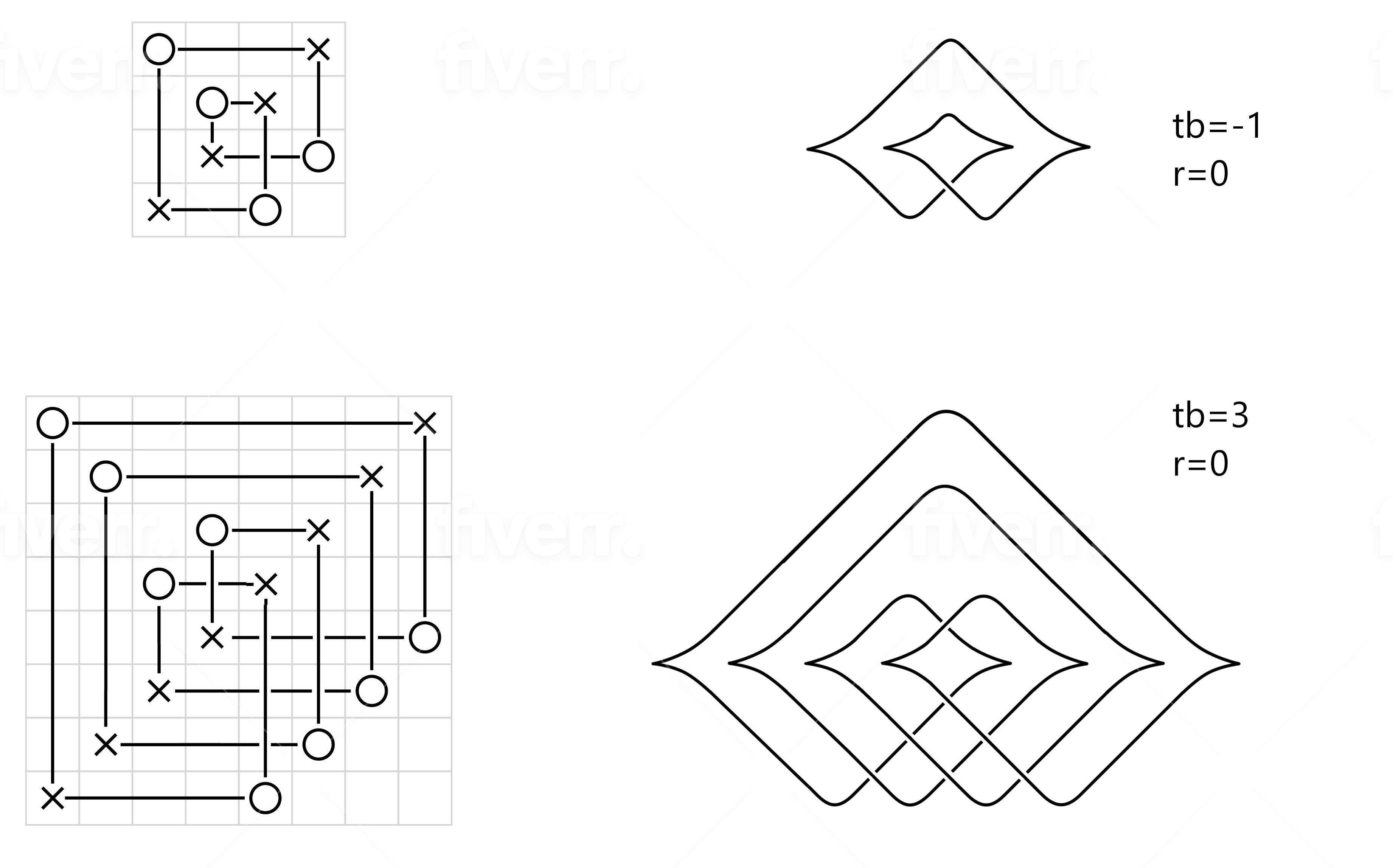} 
\caption{At the top we have a grid diagram of a Legendrian unknot with $tb=-1$ and $r=0$. At the bottom, the grid diagram (2,5) cable of Legendrian unknot having $tb=3$ and $r=0$ is obtained using our prescription}\label{untb}
\end{center}
\end{figure}

\begin{prop} \label{classinvs}

Let $\mathcal{L}_{p,q}$ be the Legendrian cable constructed using the prescription. Then,
$tb(\mathcal{L}_{p,q})= p \cdot tb(L) + (p-1)q $ and
$r(\mathcal{L}_{p,q})=p \cdot r(L)$.

\end{prop}

\begin{proof}
Let $x_{SE}$, $x_{SW}$, $x_{NE}$ and $x_{NW}$ denote the number of respective x corners in the grid $D_{0}$ obtained from $D$ right before block replacement (Similarly for $o_{SE}$, $o_{SW}$, $o_{NE}$ and $o_{NW}$). From the formula of $tb$ in terms of grid diagram, we have \[p \cdot tb(L)= - p \cdot wr(D_0)- \frac{1}{2}(p \cdot  x_{NE} + p \cdot o_{SW}+ p \cdot  x_{SW} + p \cdot  o_{NE}) .\]

Now using, $q_B$ (the cabling parameter of $D'_p$ with respect to the blackboard framing) we have $-q=p \cdot wr(D_0) + q_B$ (Since $D'_p$ represents $L_{p,-q}=m(L_{p,q})$). We make the following computation (See Figure \ref{untb}),
\begin{align*}
 tb(\mathcal{L}_{p,q})= - wr(D'_{p})- \frac{1}{2}(p \cdot  x_{NE} + p \cdot o_{SW}+ p \cdot  x_{SW} + p \cdot  o_{NE})\\
 = - wr(D'_{p}) + p \cdot wr(D_0) +  p \cdot tb(L) \\
 = - [ p^2 wr(D_0) + (p-1) q_B] + p \cdot wr(D_0) +  p \cdot tb(L)\\
 = - (p-1) [ p \cdot wr(D_0) + q_B] +  p \cdot tb(L)\\
 =   p \cdot  tb(L) + (p-1)q.
 \end{align*}
Similarly, it is straight forward to see that 
\[r(\mathcal{L}_{p,q})= \frac{1}{2}(p \cdot x_{NE}+ p \cdot o_{SW}- p \cdot x_{SW}- p \cdot o_{NE}) \\
=\frac{p}{2}(x_{NE}+o_{SW}-x_{SW}-o_{NE}) \\
=p \cdot r(L) .\]

\end{proof}

\vspace{10mm}
\pagebreak

\section{Grid Homology}

We will consider several chain complexes associated to grid diagrams. These grid complexes provide a combinatorial approach to link Floer complexes defined using holomorphic theory \cite{HolomorphicLinkInvariants}. The reader is referred to \cite{Grid Homology for Knots and Links} for   
a comprehensive exposition.\\

A grid state $x$ for a toroidal grid diagram $D$ with grid number $n$ consists of
$n$ points in the torus such that each horizontal and each vertical circle contains precisely one element of $x$. The set of grid states for $D$ is denoted by $S(D)$. Equivalently, we can regard the generators as $n$-tuples of intersection points between the horizontal and vertical circles, such that no intersection point appears on more than one horizontal or vertical circle.\\

Given $x,y \in S(D)$, let $Rect(x,y)$ denote the space of embedded rectangles with the following properties. $Rect(x,y)$ is empty unless $x,y$ coincide at exactly $n-2$ points. An
element $r$ of $Rect(x,y)$ is an embedded disk in $\mathbb{T}$, whose boundary consists of four
arcs, each contained in horizontal or vertical circles; under the orientation induced
on the boundary of $r$, the horizontal arcs are oriented from a point in $x$ to a point
in $y$. The set of empty rectangles $r \in Rect(x,y)$ with $x \cap Int(r) = \phi$ is denoted
by $Rect^{o}(x, y)$. More generally, a path from $x$ to $y$ is a $1$-cycle $\gamma$ on $\mathbb{T}$ contained in the union of horizontal and vertical circles such that the boundary of
the intersection of $\gamma$ with the union of the horizontal curves is $y-x$ , and a domain
$\Delta$ from $x$ to $y$ is a two-chain in $\mathbb{T}$ whose boundary $\partial \Delta$ is a path from x to y.\\

The \textbf{unblocked grid complex}, $(GC^{-}(D), \partial_{\mathbb{X}}^{-})$, is defined \cite{Grid Homology for Knots and Links}  in the following way- \[ {GC}^{-}(D)= \ Free \ \mathbb{F}_{2}[V_{1},V_{2},...,V_{n}] \ module \ over \ grid \ states \ S(D) \] 
\[ \partial_{\mathbb{X}}^{-}x := \sum\limits_{y\in S(D)} \sum\limits_{r \in Rect^{o}(x,y), r\cap \mathbb{X}=\phi} V_{1}^{O_{1}(r)}...V_{m}^{O_{m}(r)} y \ \  \forall x \in S(D) \] \\

For sets $P,Q$ of finitely many points define, $\mathcal{J}(P,Q)=\frac{\mathcal{I}(P,Q)+\mathcal{I}(Q,P)}{2} $ where $\mathcal{I}(P,Q)$ counts ordered pairs of points $(a,b)\in P \times Q$ such that $b$ has both coordinates greater than $a$. The homological grading of a generator $x \in S(D)$ in this complex is given by Maslov grading which is defined as a function  $M: S(D) \rightarrow \mathbb{Z}$, \[ M(x) = \mathcal{J} (x - \mathbb{O},x - \mathbb{O})+1 \]  Multiplication by $V_{i}$ lowers the Maslov grading by $2$. $\partial_{\mathbb{X}}^{-}$ lowers Maslov grading by 1. Additionally, the complex comes with an Alexander grading  $A$. For a generator $x \in S(D)$, it is defined by the formula,
 \[ A(x) = \frac{\mathcal{J} (x - \mathbb{O},x - \mathbb{O})- \mathcal{J} (x - \mathbb{O},x - \mathbb{O})}{2} -\frac{n-1}{2} \]
 
 Multiplication by $V_{i}$ lowers the Alexander grading by $1$. $\partial_{\mathbb{X}}^{-}$ drops the Maslov grading by $1$ and preserves the Alexander grading.\\ 

The maps given by multiplication by $V_{i}$ and $V_{j}$ are chain homotopic in  $GC^{-}(D)$ if $O_{i}$ and $O_{j}$ are in the same link component. Therefore, ${GH}^{-}(L)$ can be thought of as a bi-graded $\mathbb{F}_{2}[V_{1},V_{2},...,V_{l}]$ module if the link $L$ represented by $D$ has $l$ components. ${GH}^{-}(L)$ is a link invariant (\cite{MOS}) and is isomorphic to link Floer homology (${HFL}^{-}$). \\

There are several collapsed complexes that we can construct from the $\mathbb{F}_{2}[V_{1},V_{2},...,V_{n}]$  module ${GC}^{-}(D)$ by setting some of the $V_{i}$'s equal to each other. The \textbf{collapsed link grid complex} is defined as $c{GC}^{-}(D): = \frac{{GC}^{-}}{V_{i_{1}}=V_{i_{2}}=...=V_{i_{l}}}$ , where $O_{i_{k}}$ is a $O$ marking belonging in the k'th link component. Its homology, $cGH^{-}(D)$ can be thought of as a $\mathbb{F}_{2}[U]$ module. In fact, it can be shown that $cGH^{-}(D) \cong (\mathbb{F}_{2}[U])^{2^{l-1}} \oplus Tor $ (Here $Tor$ is the torsion part).\\ 

The \textbf{simply blocked grid complex}, $\widehat{GC}(D)$ is defined as $\frac{c{GC}^{-}(D)}{U_{1}=0}$ and it has homology $\widehat{GH}(L)$ which is a link invariant and can be thought as a $\mathbb{F}_{2}$ module. There is also a \textbf{fully blocked grid complex} $\widetilde{GC}(D)$, defined as $\frac{GC^{-}}{U_{1}=U_{2}=..=U_{n}=0}$. Since the complex itself is a $\mathbb{F}_{2}$  module, its simplest to compute. Its homology $\widetilde{GH}(D) \cong \widehat{GH}(L) \otimes W^{n-l}$, where $W$ is a $2$ dimensional graded vector space with generators in bi-gradings $[0,0]$ and $[-1.-1]$.\\

The element $x^{+} \in S(D)$, which consists of the intersection points at the upper right corners of the squares containing the markings X in $D$, is a cycle in $({GC}^{-}(D), {\partial}_{\mathbb{X}}^{-})$. The element $x^{-} \in S(D)$ consisting of the intersection points at the south west corners of X markings is also a cycle. If $L$ is the Legendrian link corresponding to the grid diagram $ D$, then we know that $D$ represents the topological link type of $m(L)$. The homology classes $[x^{+}], [x^{-}]  \in GH^{-}(m(L))$, denoted by $\lambda^{+}(D)$ and $\lambda^{-}(D)$ respectively, are called the Legendrian grid invariant of $D$. For the transverse push-off $\mathcal{T}$ of an oriented Legendrian link $L$, the transverse grid invariant $\theta^{-}(D)$ is defined to be $\lambda^{+}(L) \in GH^{-}(m(L)) $. The following proposition states that the homological class is a well-defined invariant of Legendrian and transverse link types. 

\begin{prop}\cite{theta} Let $D$ and $D'$ be two grid diagrams corresponding to Legendrian link $L$ (similarly transverse link $\mathcal{T}$), then there is an isomorphism \\

$\phi : GH^{-}(D) \rightarrow GH^{-}(D')$\\

such that $\phi(\lambda^{+}(D))=\lambda^{+}(D')$ and $\phi(\lambda^{-}(D))=\lambda^{-}(D')$  (similarly $\phi(\theta^{-}(D))=\theta^{-}(D')$). 
\end{prop}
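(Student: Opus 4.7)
The plan is to leverage the move-theoretic characterization of Legendrian/transverse equivalence stated earlier in the paper: two grid diagrams represent the same Legendrian link if and only if they are connected by a sequence of commutations and (de)stabilizations of types X:NW and X:SE (with X:SW added in the transverse case). It therefore suffices to exhibit, for each elementary move $D \leadsto D'$, a chain map $GC^{-}(D) \to GC^{-}(D')$ that induces an isomorphism on homology and carries the distinguished cycle $x^{+}$ (resp.\ $x^{-}$) to its counterpart in $D'$. Composing these move-by-move isomorphisms yields the desired $\phi$, and the transverse statement follows because $\theta^{-}(D) = \lambda^{+}(D)$ by definition.

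For a commutation move I would use the Ozsv\'ath--Szab\'o--Thurston pentagon construction. Interpolate between $D$ and $D'$ by a circle crossing the original one in exactly two points, and define a map $P \colon GC^{-}(D) \to GC^{-}(D')$ by counting empty pentagons that avoid $\mathbb{X}$, weighted by $V_i^{O_i(\pi)}$. Standard degeneration of hexagons on $\mathbb{T}$ shows $P$ is a chain map; composing with the analogous pentagon map $P'$ in the reverse direction, one builds a hexagon-counting chain homotopy exhibiting $P' \circ P \simeq \mathrm{id}$, so $P$ is a quasi-isomorphism. The key point is that $x^{+}(D)$ sits at the NE corners of $\mathbb{X}$, so it is extremal in the column/row being commuted; the only empty pentagon avoiding $\mathbb{X}$ emanating from $x^{+}(D)$ is the thin one landing on $x^{+}(D')$. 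The same degenerate-pentagon analysis handles $x^{-}(D)$.

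For an X:NW or X:SE stabilization, I would use the mapping cone decomposition of $GC^{-}(D')$ with respect to the newly introduced variable $V_{n+1}$. Writing $GC^{-}(D') \cong \mathrm{Cone}(\mathbf{N} \xrightarrow{D^{\mathrm{sb}}} \mathbf{L})$ for the appropriate sub/quotient complexes determined by whether the new central intersection point is occupied, one shows that $\mathbf{L}$ is acyclic (using the canonical domain through the stabilization region), so $GC^{-}(D')$ is quasi-isomorphic to $\mathbf{N}$, which is in turn identified with $GC^{-}(D)$ by a concrete bijection of grid states. Under this identification, the distinguished cycle is preserved: for an X:NW stabilization, the NE corner of the new X-square agrees with the NE corner of the parent X marking in $D$, so $x^{+}(D')$ corresponds to $x^{+}(D)$ on the nose; the symmetric argument for X:SE, and the analogous check for $x^{-}$, proceed the same way. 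For the transverse case one additionally needs X:SW stabilization invariance of $\lambda^{+}$ (equivalently $\theta^{-}$) only, and the NE corners again align correctly.

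The main obstacle is the stabilization step: verifying not only that the destabilization quasi-isomorphism is well defined, but that it sends the specific cycle $x^{\pm}(D')$ to the specific cycle $x^{\pm}(D)$ rather than merely to a homologous class. This requires carefully tracking the differential of the mapping cone on the extremal generator and showing that the correction terms vanish because $x^{\pm}$ occupies the unique corner forced by its definition. Once this equality of cycles is established, invariance of the homology classes $\lambda^{\pm}$ and $\theta^{-}$ under the full list of moves follows by concatenation.
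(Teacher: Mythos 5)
The paper does not actually prove this proposition; it is quoted verbatim from Ozsv\'ath--Szab\'o--Thurston \cite{theta}, and your outline reconstructs essentially the argument given there: reduce to commutations and X:NW/X:SE (plus X:SW in the transverse case) stabilizations via the move-theoretic characterization, handle commutation with the pentagon/hexagon maps, handle stabilization with the mapping-cone destabilization identification, and track the canonical cycles $x^{\pm}$ through each map. Since this matches the standard proof in the cited source, there is nothing further to add.
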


Therefore, we choose to write $\lambda^{+}(D)$ as $\lambda^{+}(L)$ and $\lambda^{-}(D)$ as $\lambda^{-}(L)$  when $D$ corresponds to Legendrian link type $L$. It should be noted that $\lambda^{+}(L)$ and $\lambda^{-}(L)$ are well defined only upto isomorphism in grid homology. Similarly, we will write $\theta^{-}(D)$ as $\theta^{-}(\mathcal{T})$ when $D$ corresponds to transverse link type $\mathcal{T}$. It is often more useful to consider the projection of $\theta(\mathcal{T})$ into $\widehat{GH}$, which we will call $\hat{\theta}(\mathcal{T})$. Projection of $\hat{\theta}(\mathcal{T})$. into $\widetilde{GH}(D)$ will be denoted as $\tilde{\theta}(\mathcal{T})$. It can be showed that $\hat{\theta}(\mathcal{T})=0$ if and only if  $\tilde{\theta}(\mathcal{T})=0$.\\

In our discussion, we will consider a different collapsed grid complex that we will call \textbf{fully collapsed grid complex}. It will be denoted by $\mathscr{C}(D)$ for a grid diagram $D$. 

\begin{definition}\label{definec}
 Let $D$ be a grid diagram of $m(L)$ for some link $L$. Define $\mathscr{C}(D)$ as $\mathbb{F}_{2}[U]$ module over grid states $S(D)$ and, 
  \[ {\partial}x := \mathlarger{\sum\limits_{y\in S(D)} \sum\limits_{r \in Rect^{o}(x,y), r \cap \mathbb{X} = \phi}} U^{O(r)} y \ \  \forall x \in S(D) \]
\end{definition}

The homology of this complex is not a link invariant, but the following proposition (Similar to Lemma 14.1.11 in \cite{Grid Homology for Knots and Links}) gives its relation with collapsed grid link complex.
\begin{prop}\label{quasi}

There is a quasi-isomorphism 
$\mathscr{C}(D) \cong cGC^{-}(D) \otimes W^{n-l}$.
\end{prop}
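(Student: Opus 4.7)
The plan is to realize $\mathscr{C}(D)$ as a scalar-extension quotient of $cGC^-(D)$ and to show that each of the $n-l$ extra relations contributes one factor of $W$ by a standard null-homotopy argument. Fix an index set $I = \{i_1,\dots,i_l\}$ with one chosen $O$-marking per link component, so that $cGC^-(D)$ is naturally a chain complex over $\mathbb{F}_{2}[U, V_{j_1}, \dots, V_{j_{n-l}}]$ with $U = V_{i_1} = \cdots = V_{i_l}$ and $\{j_1,\dots,j_{n-l}\}$ indexing the $O$-markings not in $I$. Directly from the definitions,
\[
\mathscr{C}(D) \;=\; cGC^-(D) \otimes_{\mathbb{F}_{2}[U, V_{j_1}, \dots, V_{j_{n-l}}]} \mathbb{F}_{2}[U],
\]
obtained by sending each $V_{j_m} \mapsto U$.

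The key input, already recalled in the paper, is that for any $j \notin I$ with $O_j$ on the $k$-th component, multiplication by $V_j$ is chain-homotopic to multiplication by $V_{i_k}$ on $GC^-(D)$ (via the usual homotopy given by counting empty rectangles crossing exactly one $X$-marking on the $k$-th component). Passing to $cGC^-(D)$, where $V_{i_k}$ has been identified with $U$, this exhibits $V_{j_m} - U$ as null-homotopic on $cGC^-(D)$ for every $m$. I would then invoke the algebraic lemma underlying Lemma~14.1.11 of \cite{Grid Homology for Knots and Links}: if $V-U$ acts null-homotopically on a bi-graded complex $C$ of $\mathbb{F}_{2}[U,V]$-modules, then $C/(V-U)$ is quasi-isomorphic, as an $\mathbb{F}_{2}[U]$-complex, to $C \otimes_{\mathbb{F}_{2}} W$, where $W$ is two-dimensional with generators in bi-gradings $(0,0)$ and $(-1,-1)$ reflecting the bi-degree of the null-homotopic operator. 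Concretely, one uses the null-homotopy to realize $C/(V-U)$ as a deformation retract of the mapping cone of $V-U$, which is then equivalent to $C \oplus C[-1,-1]$. Iterating this $n-l$ times, once per $V_{j_m}$, yields $\mathscr{C}(D) \simeq cGC^-(D) \otimes W^{n-l}$.

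The main obstacle is the inductive step: after quotienting out $V_{j_1}-U$, one must confirm that the other $V_{j_m}-U$ remain null-homotopic on the new complex, so that the argument can be reapplied. This requires checking that the individual rectangle-counting homotopies for different markings are compatible, i.e.\ that they descend to the successive quotients up to higher homotopy. This compatibility is unsurprising given the combinatorial description of the homotopies, but it is the step that requires care; once it is in hand, the grading accounting is mechanical because each $V_{j_m}$ has the same bi-degree as $U$, so the generators of $W$ land precisely in the asserted bi-gradings.
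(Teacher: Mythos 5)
Your argument is essentially identical to the paper's proof: both identify $\mathscr{C}(D)$ as the iterated quotient of $cGC^{-}(D)$ by the relations $V_{j}-U$, use the null-homotopy of $V_i - V_j$ to split the mapping cone as $cGC^{-}(D)\oplus cGC^{-}(D)[-1,-1]$ (the $W$ factor), and iterate, invoking Lemma~14.1.11 of \cite{Grid Homology for Knots and Links}. The compatibility of the homotopies under successive quotients that you flag is indeed the point the paper compresses into ``the conclusion follows by iteration,'' and it is handled by the standard fact that the rectangle-counting homotopies are $\mathbb{F}_2[V_1,\dots,V_n]$-module maps and hence descend to the quotients.
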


\begin{proof}
Assume that $O_{i}$ and $O_{j}$ belong in the same link component. Let us consider the short exact sequence\\

\begin{tikzcd}
 0 \arrow{r} & c{GC}^{-}(D) \arrow{r}{V_{i}- V_{j}}  & c{GC}^{-}(D)  \arrow{r}  & \frac{c{GC}^{-}(D)}{V_{i}- V_{j}} \arrow{r} & 0
\end{tikzcd}\\

We know that the map given by multiplication by $V_{i}- V_{j}$ is chain homotopic to $0$. Also , multiplication by  $V_{i}- V_{j}$ lowers  Maslov grading by $2$ and Alexander grading by $1$. Therefore, mapping cone is quasi-isomorphic to $c{GC}^{-}(D) \oplus c{GC}^{-}(D)[-1,-1]$. So we have a quasi-isomorphism $\frac{c{GC}^{-}(D)}{V_{i}- V_{j}} \rightarrow c{GC}^{-}(D) \otimes W $ , and the conclusion follows by iteration.
 
\end{proof}

We will need the following property of the distinguished cycle later in the next section.
\begin{prop}
The class $[x^+] \in H_{*}( \mathscr{C}(D))$ is in the $U$-image if and only if $\hat{\theta}(L)=0$.
\end{prop}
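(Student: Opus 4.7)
The plan is to connect $\mathscr{C}(D)$ to the fully blocked grid complex $\widetilde{GC}(D)$ via a short exact sequence and read off the desired statement from the resulting long exact sequence. Observe first that setting $U=0$ in $\mathscr{C}(D)$ forces the differential to count only empty rectangles that avoid both $\mathbb{X}$ and $\mathbb{O}$, which is precisely the differential defining $\widetilde{GC}(D)$. Consequently there is a short exact sequence of chain complexes
\[
0 \longrightarrow \mathscr{C}(D) \xrightarrow{\;\cdot U\;} \mathscr{C}(D) \xrightarrow{\;\pi\;} \widetilde{GC}(D) \longrightarrow 0,
\]
where multiplication by $U$ is injective because $\mathscr{C}(D)$ is a free $\mathbb{F}_2[U]$-module on the grid states.

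Next, I would pass to the induced long exact sequence on homology,
\[
\cdots \longrightarrow H_*(\mathscr{C}(D)) \xrightarrow{\;U\;} H_*(\mathscr{C}(D)) \xrightarrow{\;\pi_*\;} H_*(\widetilde{GC}(D)) \xrightarrow{\;\delta\;} H_{*-1}(\mathscr{C}(D)) \longrightarrow \cdots
\]
Exactness at the middle term says that a class in $H_*(\mathscr{C}(D))$ lies in the image of $U$-multiplication if and only if it lies in the kernel of $\pi_*$. Applying this to $[x^+]$: since $\pi(x^+) = x^+$ as a generator of $\widetilde{GC}(D)$, the image $\pi_*([x^+])$ is by definition the transverse grid invariant $\tilde{\theta}(\mathcal{T}) \in \widetilde{GH}(D)$.

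Finally, invoke the already-quoted fact that $\tilde{\theta}(\mathcal{T}) = 0$ if and only if $\hat{\theta}(\mathcal{T}) = 0$; chaining the two equivalences gives the claim. The only step requiring real care is the identification $\mathscr{C}(D)/U \cong \widetilde{GC}(D)$ at the level of chain complexes, but this is immediate from comparing the definitions of the two differentials, so there is no serious obstacle. Everything else is formal homological algebra applied to the $U$-multiplication short exact sequence.
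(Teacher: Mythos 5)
Your proof is correct and takes essentially the same approach as the paper: the short exact sequence $0 \to \mathscr{C}(D) \xrightarrow{\;U\;} \mathscr{C}(D) \to \mathscr{C}(D)/U \cong \widetilde{GC}(D) \to 0$, the induced long exact sequence, and the quoted equivalence between the vanishing of $\tilde{\theta}(L)$ and of $\hat{\theta}(L)$. If anything, your invocation of exactness at the middle term gives both implications in one stroke, slightly more cleanly than the paper's two-directional phrasing.
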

\begin{proof}
Consider the short exact sequence\\

\begin{tikzcd}
 0 \arrow{r} & \mathscr{C}(D) \arrow{r}{U} & \mathscr{C}(D) \arrow{r} & \frac{\mathscr{C}(D)}{U} \arrow{r} & 0 
\end{tikzcd}\\

Therefore, from the induced long exact sequence, we can infer that if the class $[x^+]$ is in $U$-image then the projection of $[x^+]$ in  $\frac{\mathscr{C}(D)}{U}$ is 0. Now notice that $\frac{\mathscr{C}(D)}{U} \cong \widetilde{GC}(D)$ and, the projection of $[x^+]$ there is $\tilde{\theta}(L)$. So we get  $\tilde{\theta}(L)=0$. This implies $\hat{\theta}(L)=0$. Conversely, $\hat{\theta}(L)=0$ implies $\tilde{\theta}(L)=0$. Hence, the short exact sequence implies that $[x^+] \in H_{*}( \mathscr{C}(D))$ is in the $U$-image.   

\end{proof}

\pagebreak

\section{Cables and transverse invariant}

Now we are ready to use tools of grid homology to study the constructed cables in Section \ref{constructed_grids}. We first define a change of variable in the fully collapsed complex that will be useful for relating the link complex with its cable complex.
\begin{definition}
 Let $D$ be a grid diagram of $m(L)$ for some link $L$ and $p \in \mathbb{N}$.  Define $p\mathscr{C}$ as $\mathbb{F}_{2}[U]$ module over grid states $ S(D)$ and \vspace{5mm}\\
 ${\partial}_{p\mathscr{C}}x := \mathlarger{\sum\limits_{y\in S(D)} \sum\limits_{r \in Rect^{o}(x,y), r \cap \mathbb{X} = \phi}} U^{p O(r)} y $ for a $x \in S(D)$\\ 
\end{definition}

Algebraically  $p\mathscr{C}$ is obtained from $\mathscr{C}$ by a change of variable. Therefore it inherits a new Alexander grading $\mathsf{A}$ satisfying, $\mathsf{A}(x)=pA(x)$ for $x \in S(D)$ and $\mathsf{A}(U)=-1$. The Maslov grading $M$ from $\mathscr{C}$, can be adapted as $\mathsf{M}$ in $p\mathscr{C}$ so that $\mathsf{M}(x)=pM(x)$ for $x \in S(D)$ and $\mathsf{M}(U)=-2$.  ${\partial}_{p\mathscr{C}}$ preserves the $\mathsf{A}$ and decreases $\mathsf{M}$ by $p$.

Now let us consider the grid $D_{p}$ of the $p$-cable constructed from $D$ using  the prescription given in \ref{constructed_grids}. $\mathscr{L}_{p,q}$ ($\mathcal{L}_{p,q}$)  will refer to transverse (Legendrian) link type corresponding to that grid. $L_{p,q}$ will denote the underlying link type.
\begin{figure}

\begin{center}
\includegraphics[scale=0.1]{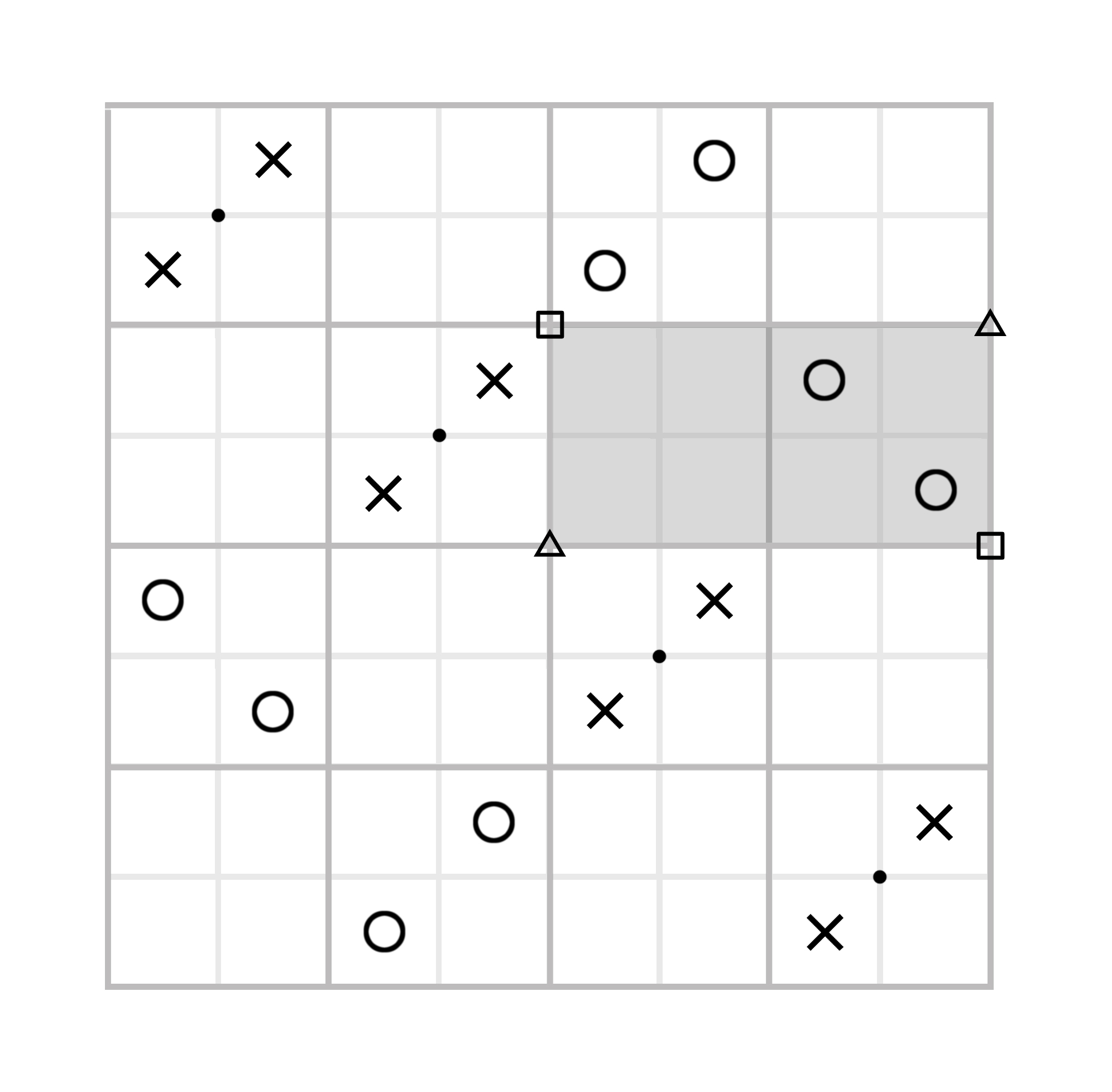} 
\end{center}

\caption{States of subcomplex $\mathcal{K}$} \label{cblock}
\end{figure}
Define $i : p\mathscr{C} \rightarrow \mathscr{C}(D_{p})$ to be the $\mathbb{F}_{2}[U]$ module map that takes a generator state $x$ in parent grid $D$ to a state in the cable grid $D_{p}$ obtained by taking union of  North-East corners of X in the middle of each block and $x$. Let $\mathcal{K}$ be the sub-module of $\mathscr{C}(D_{p})$ generated by all the states that contain North-East corner of X in the middle of each block [See Figure~\ref{cblock}].

\begin{prop}
The map $i$ is a injective chain map and $\mathcal{K}$  is a  subcomplex of $\mathscr{C}(D_{p})$ isomorphic to $p\mathscr{C}$ .
\end{prop}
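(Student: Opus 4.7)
The plan is to establish the three claims---$i$ is well-defined, injective, and has image equal to the subcomplex $\mathcal{K} \cong p\mathscr{C}$---by a direct combinatorial analysis of the inflated grid $D_p$, exploiting the fact that the fixed NE-corner points in each X-block are rigid: any empty rectangle having one of them as a corner must contain an X.

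I would begin by verifying well-definedness. The grid $D_p$ has $np$ horizontal and $np$ vertical circles, which split into $n$ \emph{inherited} circles per direction (at indices that are multiples of $p$, corresponding to the circles of $D$) and $n(p-1)$ \emph{new} circles. In each X-block (necessarily of type A, at some parent position $(I,J)$) the X's sit on the diagonal $(pI+k,\,pJ+k)$ for $k=0,\ldots,p-1$, and the NE corners of the $p-1$ middle X's ($k=0,\ldots,p-2$) land at the interior vertices $(V_{pI+k+1},\,H_{pJ+k+1})$. Since each block column and each block row contains exactly one X-block, the fixed points occupy every new vertical and every new horizontal circle exactly once, leaving the $n$ inherited vertical and $n$ inherited horizontal circles free for the image of $x$. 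Hence $i(x)$ is a valid state, $i$ is plainly injective, and its image coincides with $\mathcal{K}$, since any state containing all fixed points must fill the inherited circles by a permutation, i.e., by a state of $D$.

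Next, for the subcomplex property, the key local claim is: a fixed point $v = (V_{pI+k+1},\,H_{pJ+k+1})$ with $k \in \{0,\ldots,p-2\}$ can never be a corner of an empty rectangle $r' \in Rect^o(y, z)$ satisfying $r' \cap \mathbb{X} = \emptyset$. Indeed, by the orientation convention the points of $y$ occupy the SW and NE corners of $r'$. If $v$ is the SW corner then $r'$ extends northeast and contains the square $(pI+k+1,\,pJ+k+1)$, which is the next X on the diagonal (existing because $k+1 \leq p-1$). If $v$ is the NE corner then $r'$ extends southwest and contains the X at $(pI+k,\,pJ+k)$ whose NE corner is $v$. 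Either option contradicts $r' \cap \mathbb{X} = \emptyset$. Therefore all four corners of such an $r'$ lie at inherited vertices, the target state $z$ still contains every fixed point, and $z \in \mathcal{K}$.

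Finally, I would match $\partial_{\mathscr{C}(D_p)}|_\mathcal{K}$ with $\partial_{p\mathscr{C}}$ through $i$. An empty rectangle $r' \in Rect^o(i(x), i(y))$ avoiding $\mathbb{X}$ has corners only at inherited vertices, so it is a union of whole parent blocks and corresponds bijectively to an empty rectangle $r \in Rect^o(x, y)$ in $D$ avoiding $\mathbb{X}$: X-avoidance transfers because each X-block is entirely inside or outside $r'$, and emptiness transfers because the only state-points of $y$ interior to $r'$ are free points corresponding to $x$-points interior to $r$. Since every O-marked parent square deposits $p$ O-markings into its block in $D_p$ regardless of the block type A, B, C, or D, one has $O(r') = p\cdot O(r)$, giving $U^{O(r')} = U^{p\,O(r)}$---precisely the weight in $\partial_{p\mathscr{C}}$. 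I expect the main obstacle to be the local corner analysis in the subcomplex step, particularly tracking the orientation convention carefully; everything else is essentially bookkeeping about how the block structure interacts with rectangles.
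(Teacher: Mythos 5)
Your proposal is correct and follows essentially the same route as the paper: the paper likewise rests on the observations that no empty rectangle avoiding $\mathbb{X}$ can have a fixed (special) point as a corner, so rectangles out of $\mathcal{K}$ stay in $\mathcal{K}$, and that a parent rectangle corresponds to a block rectangle carrying $p$ times as many $O$'s, giving the weight $U^{pO(r)}$. You simply supply in detail the corner/orientation analysis and the well-definedness of $i(x)$ as a grid state, which the paper asserts more briefly.
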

\begin{proof}
It follows from the definition that $i (p\mathscr{C}) = \mathcal{K}$. Now to prove that $i$ is a chain map, we need to verify that $i {\partial}_{p\mathscr{C}}= {\partial}_{\mathscr{C}(D_{p})} i$. Lets take  states $\mathsmaller{\square},\vartriangle \in p\mathscr{C}$ such that $ {\partial}_{p\mathscr{C}}(\vartriangle)= U^{pk} \ \mathsmaller{\square}+..$ [as depicted in Fig \ref{cblock}]. This implies  $ i({\partial}_{p\mathscr{C}}(\vartriangle))= U^{pk} \ i(\mathsmaller{\square}) + ... \ $. Also, we have ${\partial}_{\mathscr{C}(D_{p})} ( i( \vartriangle))  = U^{pk} \ i(\mathsmaller{\square})+ ... \ $ because the shaded rectangle contains $p$ times manys Os in the cable grid. Since there is no rectangle coming out the special points of $\mathcal{K}$, any rectangle coming out of $i(\vartriangle)$ must join it with another state of the form $i(\mathsmaller{\square})$ for some $\mathsmaller{\square}$. Hence, the map $i$ satisfies $i {\partial}_{p\mathscr{C}}= {\partial}_{\mathscr{C}(D_{p})} i$. Also since $i$ is an injective chain map, it follows that $\mathcal{K}$ of is a  subcomplex of $\mathscr{C}(D_{p})$ isomorphic to $p\mathscr{C}$.   

\end{proof}
 
\begin{prop}\label{graddeg}

The map $i$ sends the distinguished cycles $[x^+]$ and $[x^{-}]$ in $p\mathscr{C}$ to the distinguished cycles $[x^+]$ and $[x^{-}]$ respectively in $\mathscr{C}(D_{p})$. It shifts the Alexander grading by $\frac{(p-1)(q-1)}{2}$ and Maslov grading by $(p-1)(q-1)$.
\end{prop}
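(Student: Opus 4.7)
The plan is to verify the two assertions separately: first, that $i$ carries $x^{\pm} \in p\mathscr{C}$ to $x^{\pm} \in \mathscr{C}(D_p)$; second, that the resulting bigrading shift is a constant equal to the stated values. The first is a combinatorial check at the level of generators; the second reduces to a grading computation on one convenient generator.

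For the cycle correspondence, I would unpack the embedding underlying $i$: the intersection point $(i,j)$ of $D$ is identified with $(pi,pj)$ in $D_p$, namely the NE vertex of the $(i,j)$-block. In an A-block corresponding to an X-square of $D$, the $p$ X markings lie along the SW--NE diagonal, so the set of their NE corners and the set of their SW corners share the $p-1$ \textbf{middle} diagonal intersection points and differ only at the two extreme block vertices. Consequently, the embedded image of $x^{+}_{D}$ lands at the NE vertex of each A-block (the NE corner of the NE-most X there), and adjoining the $p-1$ middle points completes this to all $p$ NE corners of X markings in the block, namely $x^{+}_{D_p}$. The \textit{same} middle set supplements the embedded image of $x^{-}_{D}$, which lands at the SW vertex of each A-block, to all $p$ SW corners of X markings in the block, giving $x^{-}_{D_p}$.

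For the grading shift, $i$ is an $\mathbb{F}_{2}[U]$-linear chain map sending pure grid states to pure grid states, so both $A_{D_p}(i(\mathbf{x})) - \mathsf{A}(\mathbf{x})$ and $M_{D_p}(i(\mathbf{x})) - \mathsf{M}(\mathbf{x})$ are constants, independent of $\mathbf{x} \in S(D)$, and it suffices to evaluate them on $x^{+}$. A direct route is to plug $x^{+}_{D}$ and $x^{+}_{D_p}$ into $M(x) = \mathcal{J}(x - \mathbb{O}, x - \mathbb{O}) + 1$ and its Alexander analogue, splitting each $\mathcal{J}$-sum for $D_p$ into pairs both lying inside one block versus pairs in distinct blocks: the distinct-block contributions collapse to $p$ times the $D$-counts up to corrections recording the cabling parameter $q$, while the within-block contributions are read off from the explicit block geometry. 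A more conceptual alternative feeds the known formulas expressing $M(\lambda^{\pm})$ and $A(\lambda^{\pm})$ in terms of $tb$ and $r$ into the cabling identities $tb(L_{p,q}) = p^{2}\,tb(L) + pq$ and $r(L_{p,q}) = p\, r(L)$ realized by the construction of Section~\ref{constructed_grids}. In either route the real obstacle is bookkeeping --- confirming that the ``parent'' pairs really assemble into exactly $p$ times the $D$-counts and that the residual within-block pairs sum to $(p-1)(q-1)$ in Maslov and $\frac{(p-1)(q-1)}{2}$ in Alexander --- and once the shift is pinned down on $x^{+}$, the constancy promotes it to all generators and finishes the proof.
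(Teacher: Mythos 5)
Your identification of $i(x^{\pm})$ with the distinguished states of $D_p$ is correct and in fact more explicit than the paper (which declares it obvious), and your reduction of the grading claim to a single generator via constancy of the shift matches the paper's remark that $i$ respects relative gradings. The gap is in the grading computation itself, which is the entire quantitative content of the proposition. Your first route defers everything to ``bookkeeping'' --- but that bookkeeping is precisely where $\frac{(p-1)(q-1)}{2}$ has to come from, and nothing in your sketch pins down the value (note also that cross-block pairs in the $\mathcal{J}$-sums naively contribute $p^2$ times, not $p$ times, the parent counts, so even the shape of the claimed collapse needs justification). Your second route, the only one with concrete formulas, uses cabling identities that are wrong for the cables this construction produces: a front/grid cable turns each crossing of $D$ into $p^2$ crossings and each cusp into $p$ cusps, which gives $tb(L_{p,q}) = p\,tb(L) + (p-1)q$ and $r(L_{p,q}) = p\,r(L)$, hence $sl(L_{p,q}) = p\,sl(L) + (p-1)q$ --- not $tb(L_{p,q}) = p^{2}tb(L) + pq$. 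As a sanity check, your formula applied to the $tb=-1$ unknot would give a $(2,3)$-cable (a trefoil) with $tb = -4+6 = 2$, violating the Bennequin bound, whereas the correct formula gives $pq - p - q = 1$. Feeding your identities into $A(x^{+}) = \frac{sl+1}{2}$ produces a shift of $\frac{p(p-1)\,tb(L) + pq - (p-1)}{2}$, which is not $\frac{(p-1)(q-1)}{2}$.

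For comparison, the paper's proof is your ``conceptual alternative'' carried out correctly: it invokes $A(x^{+}) = \frac{sl+1}{2}$ and $M(x^{+}) = sl+1$ (OSS 12.7.5), so the shift equals $\frac{sl(L_{p,q}) - p\,sl(L) - (p-1)}{2}$, and then computes $sl(L_{p,q})$ from a braid representative: if $L$ is a braid of index $N$ and the cable carries $r$ extra twists relative to the blackboard framing, then $q = p\,wr(L) + r$, $wr(L_{p,q}) = p^{2}wr(L) + r(p-1)$, the cable is a braid of index $Np$, and $sl = wr - (\text{braid index})$ yields $sl(L_{p,q}) - p\,sl(L) - (p-1) = (p-1)(q-1)$. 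Replacing your $tb$ identity with the correct one (or with this $sl$ computation) repairs your argument.
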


\begin{proof}

It is obvious from the construction that $i$ sends the distinguished states $x^+$ and $x^{-}$ in $p\mathscr{C}$ to the  distinguished states $x^+$ and $x^{-}$ respectively in $ \mathscr{C}(D_{p})$. Also it is easy to see that $i$ respects relative Alexander and Maslov grading. Hence, we just need to compute the Alexander and Maslov grading difference of the distinguished state in the respective complexes. Using 12.7.5 in \cite{Grid Homology for Knots and Links}, it is equal to   \[ \mathsmaller{ A(i(x^+))- \mathsf{A}(x^+)= \frac{sl(\mathscr{L}_{p,q})+1}{2}-\frac{p(sl(L)+1)}{2}=\frac{sl(\mathscr{L}_{p,q})- p \cdot sl(L)- (p-1) }{2} . }\] To compute this quantity lets assume $L$ has braid representative $\beta_{L}$ with index $N$ and that $\mathscr{L}_{p,q}$ has $r$ twists with respect to blackboard framing. Then, $q=p\cdot wr(\beta_L)+r$ and $\mathscr{L}_{p,q}$ has a braid representative $\beta_{\mathscr{L}_{p,q}}$ with index $Np$ and  $wr(\beta_{\mathscr{L}_{p,q}})= p^{2}\cdot wr(\beta_L) + r(p-1) $. We also know that for braid $\beta$ of index $n$, $sl(\beta)=wr(\beta)-n$. Hence, it is equal to - \[ \mathsmaller{ \frac{(wr(\beta_{\mathscr{L}_{p,q}})-Np)-p(wr(\beta_L)-N)- (p-1)}{2} = \frac{(p-1)(p \cdot wr(\beta_L)+r-1)}{2} = \frac{(p-1)(q-1)}{2}. } \]   
 
 Similarly, $ M(i(x^+))- \mathsf{M}(x^+)= (sl(\mathscr{L}_{p,q})+1)-p(sl(L)+1)= (p-1)(q-1)$.
\end{proof}

\begin{prop}\label{onedir}
 If $\hat{\theta}(L)=0$ then $\hat{\theta}(\mathscr{L}_{p,q})=0$
\end{prop}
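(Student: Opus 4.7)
The strategy is to move the hypothesis through the auxiliary complex $p\mathscr{C}$ and then across the inclusion $i$, exploiting the characterization of $\hat{\theta} = 0$ as $U$-divisibility of the distinguished cycle established in the last proposition of Section~3.

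First I would apply that proposition to rephrase the hypothesis: $\hat{\theta}(L) = 0$ means $[x^+] \in U \cdot H_*(\mathscr{C}(D))$, which by the long exact sequence coming from $0 \to \mathscr{C}(D) \xrightarrow{U} \mathscr{C}(D) \to \mathscr{C}(D)/U \to 0$ is the same as saying that the image of $[x^+]$ in $H_*(\mathscr{C}(D)/U)$ vanishes.

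The core observation is then that $\mathscr{C}(D)/U$ and $p\mathscr{C}/U$ are literally the same chain complex. Both have underlying $\mathbb{F}_2$-vector space generated by $S(D)$, and reducing either differential modulo $U$ kills every term $U^{kO(r)} y$ with $O(r) \geq 1$, leaving precisely the empty rectangles with $r \cap \mathbb{O} = \emptyset$; the power $p$ in the $p\mathscr{C}$ differential is invisible after setting $U=0$. The distinguished state $x^+$ corresponds to itself. Running the analogous long exact sequence for $p\mathscr{C}$ in reverse then produces a cycle $w \in p\mathscr{C}$ with $[x^+] = U \cdot [w]$ in $H_*(p\mathscr{C})$.

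Finally I would push this identity forward along the chain map $i$. Since $i$ is $\mathbb{F}_2[U]$-linear and, by Proposition~\ref{graddeg}, sends the distinguished cycle $x^+ \in p\mathscr{C}$ to the distinguished cycle $x^+ \in \mathscr{C}(D_p)$, we obtain $[x^+] = U \cdot i_*[w]$ in $H_*(\mathscr{C}(D_p))$. Invoking the $U$-divisibility characterization once more, this time for the cable grid $D_p$ of $m(L_{p,q})$, yields $\hat{\theta}(L_{p,q}) = 0$. The only nontrivial step is the identification of the two mod-$U$ complexes; the rest is formal homological algebra, so I do not anticipate any real obstacle.
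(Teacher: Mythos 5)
Your proposal is correct and follows essentially the same route as the paper: establish that $[x^+]$ is $U$-divisible in $H_*(p\mathscr{C})$, then push that relation forward along the $\mathbb{F}_2[U]$-linear map $i_*$ using $i(x^+)=x^+$, and invoke the $U$-image characterization for $\mathscr{C}(D_p)$. The only difference is that you explicitly justify the transfer of $U$-divisibility from $\mathscr{C}(D)$ to $p\mathscr{C}$ via the identification of the two mod-$U$ complexes, a step the paper's one-line proof takes for granted; your justification is valid.
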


\begin{proof}

 $\hat{\theta}(L)=0$ implies that $[x^+]$ is in the $U$-image in the homology of complex $p\mathscr{C}$ i.e., $[x^+]=Uy$ for some $y$. Then, $i_{*}([x^+])=i_{*}(Uy)=Ui_{*}(y)$ is also in $U$-image in the homology of complex $\mathscr{C}(D_{p})$. So, it follows that $\hat{\theta}(\mathscr{L}_{p,q})=0$.      

\end{proof}

\begin{figure}[!tbph]
\begin{center}

\includegraphics[scale=.06]{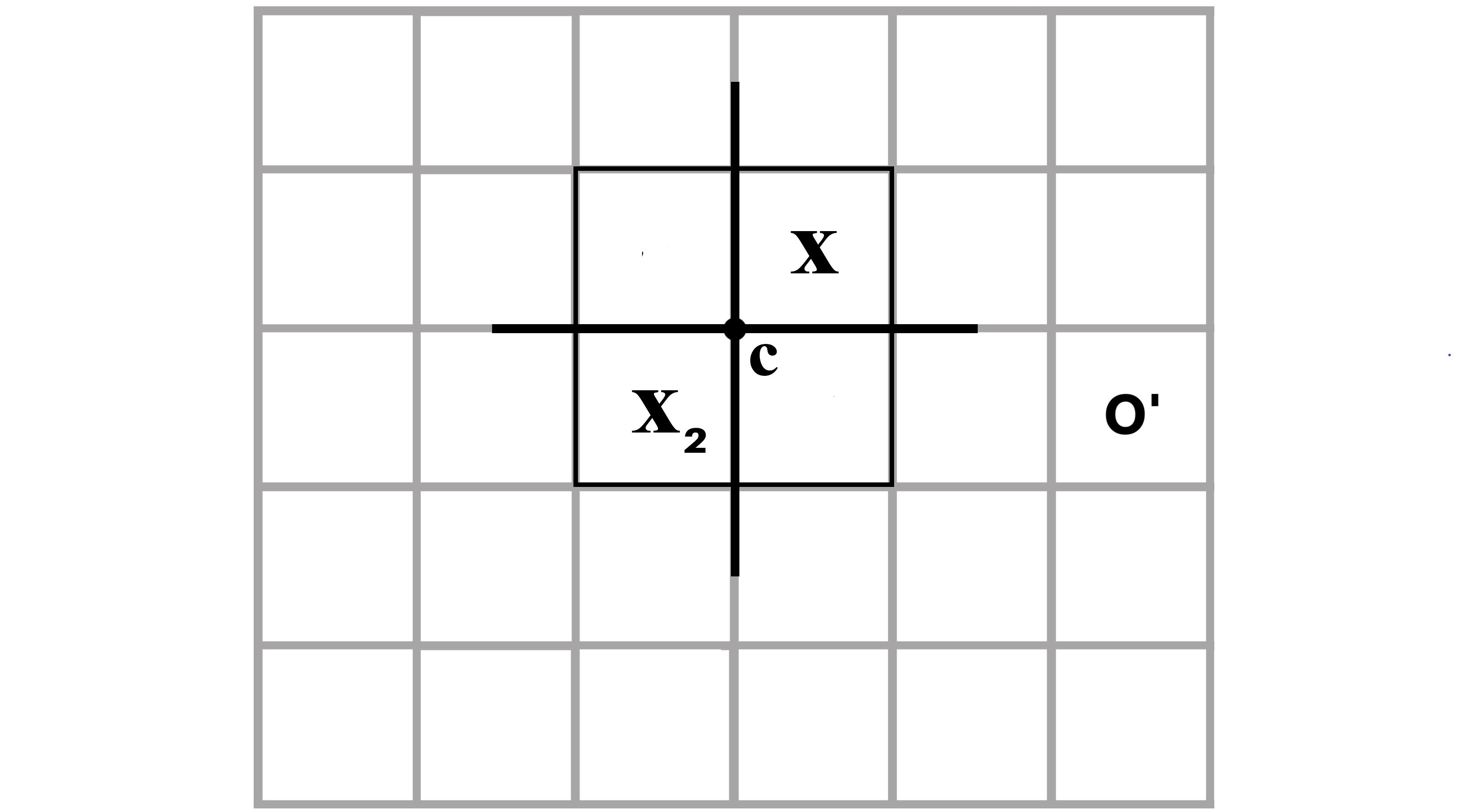}
\end{center}
\caption{Special point $c$ and markings around it}\label{special}
\end{figure}

\begin{theorem}\label{inclusionhom1}

Let ${L}_{p,q}$ be the cable constructed using the prescription of section 2 for $\frac{q}{p}$ sufficiently large and $i: p\mathscr{C} \rightarrow \mathscr{C}_{{L}_{p,q}} $ be the constructed inclusion map. $i([x^{\pm}]_{p\mathscr{C}})$ is in the $U$ image if and only if  $[x^{\pm}]_{\mathscr{C}_{{{L}_{p,q}}}}$ is in the $U$-image.
\end{theorem}

\begin{proof}

Let $ \tilde{i}: \frac{p\mathscr{C}}{U} \rightarrow \frac{\mathscr{C}_{L_{p,q}}}{U}$ be the induced map on quotients. Then it is enough to show that  $\tilde{i}([x^{\pm}]) =0$ iff $[x^ {\pm}]=0$ in $\frac{\mathscr{C}_{L_{p,q}}}{U}$. We first show it for the distinguished cycle $[x^+]$. The same argument also proves the assertion for  $[x^-]$.\\

Lets consider one of the $n \times n$ block in the grid [See Figure~\ref{special} ]. There are two X markings inside the block around the special point $c$. The north-east square is marked with X; the south-west square is marked with $X_{2}$, and they intersect at $c$. Let $O'$ be the O marking in the row containing $X_{2} $.    We will write, $ \frac{\mathscr{C}(D_{p})}{U} = \mathcal{S}\oplus \mathcal{N}$, where $\mathcal{S}$ is a sub-module generated by all states that contain a special point $c$ and $\mathcal{N}$ is a sub-module generated by all states that don't contain $c$. Since there are no rectangles coming out of the special point $c$, $\mathcal{S}$ is a subcomplex as before. Therefore, the differential of the complex can be written as, $\partial = \begin{bmatrix}
    {{\partial}_{S}}^{S}      & {{\partial}_{N}}^{S} \\
     0      & {{\partial}_{N}}^{N} \\
   
\end{bmatrix}
$. 

\begin{figure}
\begin{center}

\includegraphics[scale=.26]{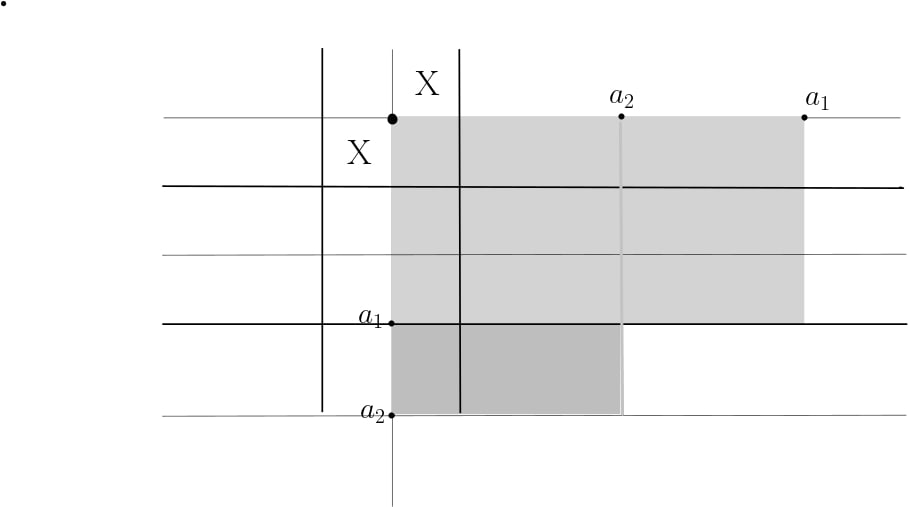}
\end{center}
\caption{ $a_i$'s are states in $\mathcal{N}$ that map to $x^+$ under ${{\partial}_{N}}^{S}$ }\label{cancellation}
\end{figure}

 So  the chain complex $(\frac{ \mathscr{C}(D_{p})}{U}, \partial)$ can be seen as $Cone({{\partial}_{N}}^{S})$. By repeating this construction for other special intersection points (in lieu of $c$), we observe that  $\frac{\mathscr{C}(D_{p})}{U}$ is obtained using the iterated mapping cone construction starting from $\frac{\mathcal{K}}{U}$. Then, at each level we have a short exact sequence \begin{equation*} 
\begin{tikzcd}
 0 \arrow{r} & \mathcal{S} \arrow{r}{ \tilde{i}} & Cone({{\partial}_{N}}^{S}) \arrow{r}{\pi} &  \mathcal{N} \arrow{r} & 0 
\end{tikzcd}
\end{equation*}
 
It induces the long exact sequence,

\begin{equation*} 
\begin{tikzcd}
 \cdots \arrow{r}{{{\partial}_{N}}^{S}} & H_{*}(\mathcal{S}) \arrow{r}{ \tilde{i}_{*}} & H_{*}(Cone({{\partial}_{N}}^{S})) \arrow{r}{\pi_{*}} &  H_{*}(\mathcal{N}) \arrow{r} & \cdots 
\end{tikzcd}
\end{equation*}
 So, the inclusion of $[x^+]$ at some level may become $0$ if and only if $[x^+]$ is in the image of $H({{\partial}_{N}}^{S}): H_{*}(\mathcal{N}) \rightarrow H_{*}(\mathcal{S})$ map. We claim that it is not possible if the blocks for X and O markings have been chosen using our prescription unless $[x^+]=0$ in the previous level also.\\

\begin{figure}
\begin{center}
\includegraphics[scale=.5]{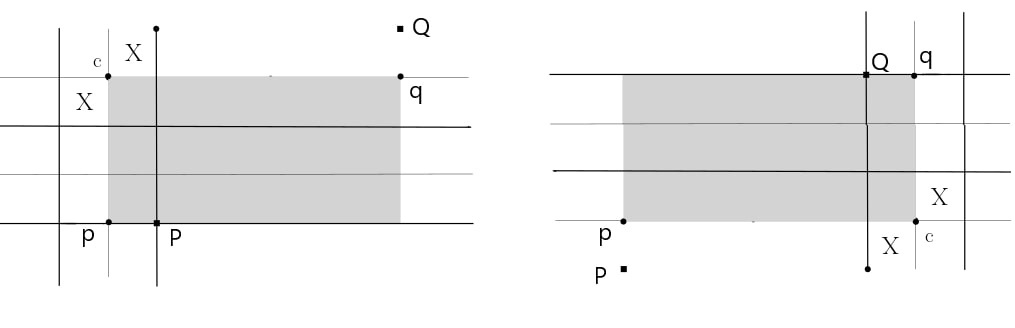}
\end{center}
\caption{ When c is the north east corner of the empty rectangle points p and q of the state $a_i$ are lifted to north east corners P and Q of the corresponding blocks in $\bar{a_i}$. When $c$ is the south east corner of the empty rectangle we lift p and q to south west  corners of the corresponding blocks P and Q. }\label{lifting}
\end{figure} 

Let $[x^+]= {\partial_{N}}^{S}( [a_1] + \cdots + [a_k])$. Where $a_i$ are states in $\mathcal{N}$ such that ${{\partial}_{N}}^{N}( \sum_{i} a_i)=0$. It follows that all but two points of $a_i$s are NE corners of X-markings. We call them $p$ and $q$. We can lift the states $a_i \in \mathcal{N}$  to $ \bar{a_i}  \in \mathcal{S}$, so that $\bar{a_{i}}$ differs from $a_{i}$ in only three points [See Figure \ref{lifting} and \ref{rightb}]. If $c$ is the north west corner of the empty rectangle then we lift p and q to north east corners of the corresponding blocks P and Q. When $c$ is the south east corner of the empty rectangle we lift p and q to south west  corners of the corresponding blocks P and Q. $ \bar{a_i} $ contains the special point c, it also contains P and Q (north east corners of the blocks containing the two points that are not north east corners of a X marking). Now, we have ${\partial_{S}}^{S}([\bar{a_i}])={\partial_{N}}^{S}([a_i])$. \\

Now, we make the key observation that there is no O or X marking in the rectangle bounded by $x^+$ and $ \bar{a_i}$ because otherwise that O-block is not a type A block as type $A$ blocks can only have O markings in the diagonal squares and the available (possibly non-empty) squares in the rectangle are non-diagonal (In Figures \ref{lifting2} and \ref{rightb} light yellow squares are non-diagonal)  $p \times p$ grid. But observe that if any O marking exist in the light yellow region, it will correspond to a north-east or south-west corner in the original grid $D$. So we can conclude that no O markings exist in the rectangle as north-east or south-west O corners were replaced by blocks of type A in our construction (See Figure \ref{rightb}).

\begin{figure}
    \centering
    \subfloat{{\includegraphics[scale=0.4]{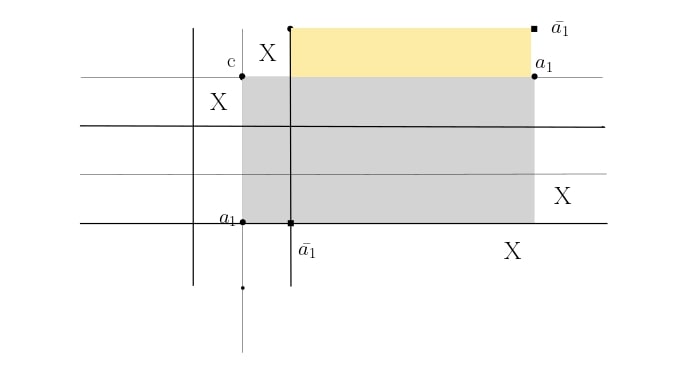} }}
    \qquad
    \subfloat{{\includegraphics[scale=0.4]{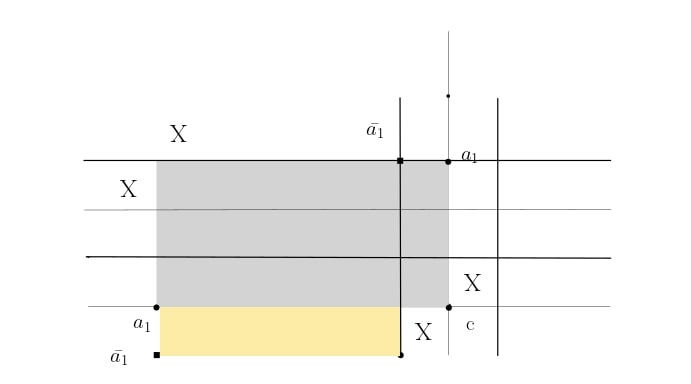} }}
     \caption{Light yellow regions contain no markings }\label{lifting2}
\end{figure} 

So we get  $[x^+]= {\partial_{S}}^{S}([\bar{a_1}]+\cdots+[\bar{a_k}])$.
By iterating this procedure for other special points, we reach the conclusion of our theorem.
\end{proof}

\begin{figure}
    \centering
   	\includegraphics[scale=0.1]{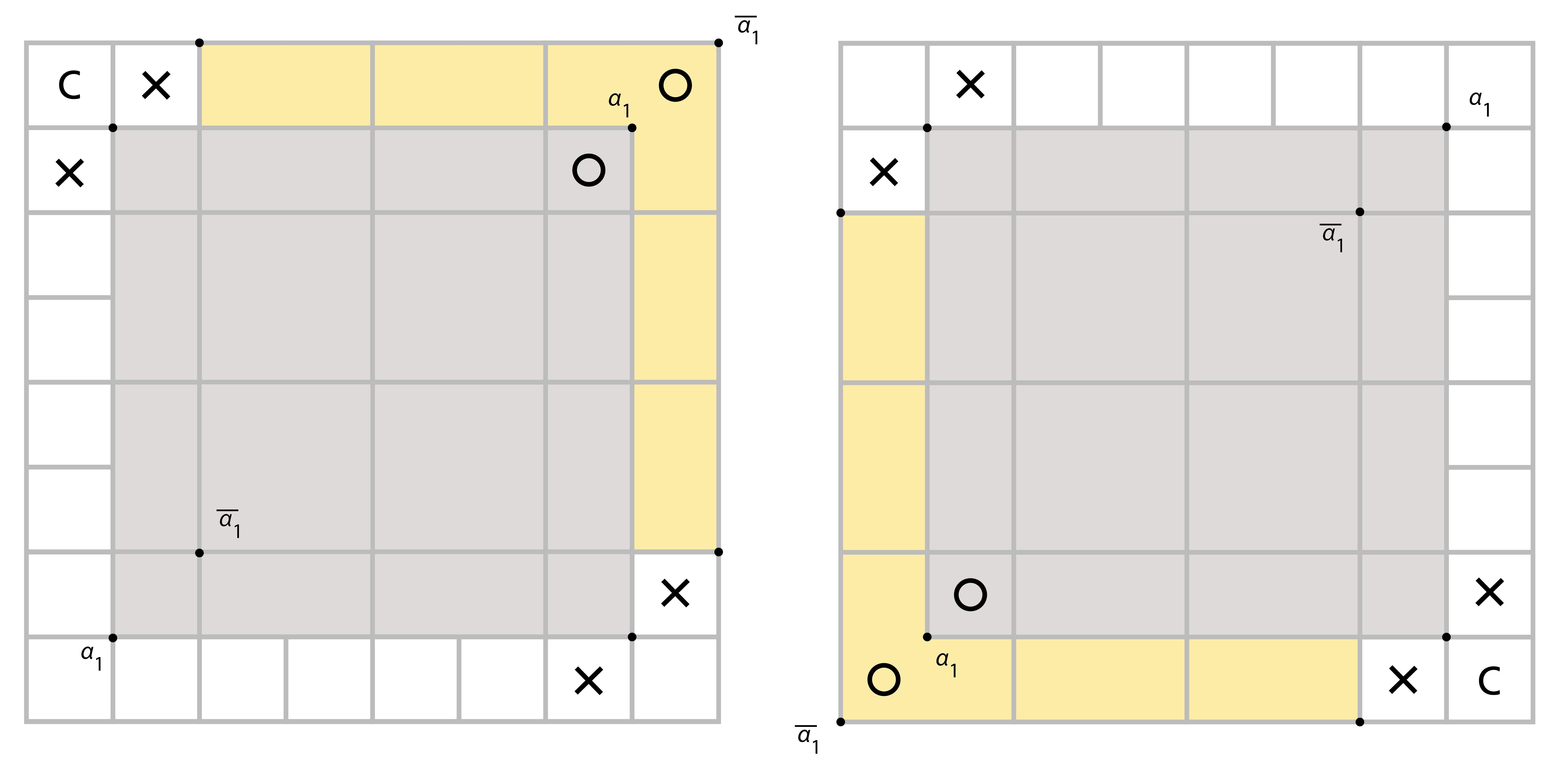}
     \caption{If Blocks of Type A is chosen for NE and SW O corners light yellow region containing O marking would imply existence of an O marking in the grey region}\label{rightb}
\end{figure} 

As a corollary, we obtain out key theorem -

\begin{proof}[Proof of Theorem \ref{mostimp}]
We already know one direction from Proposition \ref{onedir}. Now by Theorem \ref{inclusionhom1}, we know that $i$ induces inclusion (as $\mathbb{F}_{2}[U]$ module) on homology. So $\hat{\theta}(\mathscr{L}_{p,q})=0$ implies  $[x^+]$ is in $U$-image in $H_{*}(\mathscr{C}(D_{p}))$ and hence in $H_{*}(p\mathscr{C})$. It follows that $\hat{\theta}(L)=0$.  
\end{proof}

\vspace{5mm}
Now, for a Legendrian link $L$, instead of looking at $\lambda^{+}(L)$ or $\lambda^{-}(L)$ individually, it is more useful to consider the sum  $\lambda^{+}(L)+\lambda^{-}(L)$ that will be denoted by $\eta(L)$.  Its projection in the hat complex will be denoted by $\hat{\eta}(L)$.

\begin{proof}[Proof of Theorem \ref{chekanov}]

We want to show that $\hat{\eta}(L)=0$ if and only if $\hat{\eta}(\mathcal{L}_{p,q})=0$ for the mentioned values of $p$ and $q$.
First, we need to show that $\hat{\eta}(L)=0$ if and only if its projection $\hat{\eta}'(L)$ to the fully collapsed complex is $0$. Actually, this is true for any homology class. Suppose $D$ is a grid diagram for $L$. Let us consider the short exact sequence in \ref{quasi} again.

\vspace{5mm}
\begin{tikzcd}
 0 \arrow{r} & {GC}^{-}(D) \arrow{r}{V_{i}- V_{j}}  & {GC}^{-}(D)  \arrow{r}  & \frac{{GC}^{-}(D)}{V_{i}- V_{j}} \arrow{r} & 0
\end{tikzcd}\\

From the induced long exact sequence, we can infer that projection of any homology class $\alpha=[\xi]$ is $0$ then $\xi$ is in the image of $V_{i}- V_{j}$ which implies $\alpha=[\xi]=0$ since $V_{i}- V_{j}$ is null-homotopic. Conversely if $\alpha=0$ then obviously its projection is $0$. Iteration of this argument proves our claim.\\  

 Now we observe that, we could have used the argument of Theorem \ref{inclusionhom1} for the class $[x^+]+[x^-]$ to prove that  $i([x^+]+[x^-])$ is in the $U$-image if and only if $[x^+]+[x^-]$ is in the $U$-image. Since $\eta(L)= [x^+]+[x^-]$, the conclusion follows.
\end{proof}

\subsection{Examples of Legendrian and transversely non-simple links}

Now, let $K$ and $K'$ be two transverse links with same topological type and self-linking number such that, $\hat{\theta}(K)=0$ and $\hat{\theta}(K') \neq 0$. By our construction, $K_{p,q}$ and ${K'}_{p,q}$ also represent transverse links with same topological type and self-linking number but, they are not isotopic as $\hat{\theta}$ vanishes for only one of them. So we can combine our result with the already known examples to generate various infinite families of transversely non-simple link type. The following proposition gives an example -

\begin{prop}

The topological link type ${m(10_{132})}_{p,q}$ is transversely non-simple for $\frac{q}{p}>17$.
\end{prop}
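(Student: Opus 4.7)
The plan is to reduce the statement to the already-established transverse non-simplicity of $m(10_{132})$ itself, and then propagate it to all cables via Theorem \ref{mostimp}. More precisely, I would start by invoking from the literature (Ng--Ozsv\'ath--Thurston \cite{ng} or Khandhawit--Ng \cite{kn}) the existence of two transverse representatives $\mathcal{T}_{1}$ and $\mathcal{T}_{2}$ of $m(10_{132})$ with $sl(\mathcal{T}_{1})=sl(\mathcal{T}_{2})$ such that $\hat{\theta}(\mathcal{T}_{1})=0$ while $\hat{\theta}(\mathcal{T}_{2})\neq 0$. Fix grid diagrams $D_{1}$ and $D_{2}$ representing them.

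Next, for any $p\geq 2$ and any $q\in\mathbb{Z}$, the construction of Section \ref{constructed_grids} (specifically the transverse cable proposition producing $\mathcal{T}_{p,q}$ for every integer $q$) applied to $D_{1}$ and $D_{2}$ yields grid diagrams $D_{1}'$ and $D_{2}'$ representing transverse cable links $(\mathcal{T}_{1})_{p,q}$ and $(\mathcal{T}_{2})_{p,q}$, both of which have underlying topological type $m(10_{132})_{p,q}$. The observation at the end of Section 2.2.2 that the classical invariants of the constructed cable depend only on the classical invariants of the parent link and the cabling parameter $q$ then forces $sl((\mathcal{T}_{1})_{p,q})=sl((\mathcal{T}_{2})_{p,q})$, so the two transverse cables have matching self-linking numbers.

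Finally, I would apply Theorem \ref{mostimp} to each representative separately: the easy direction (Proposition \ref{onedir}) gives $\hat{\theta}((\mathcal{T}_{1})_{p,q})=0$ from $\hat{\theta}(\mathcal{T}_{1})=0$, while the harder converse direction, which is the real content coming from Theorem \ref{inclusionhom1}, gives $\hat{\theta}((\mathcal{T}_{2})_{p,q})\neq 0$ from $\hat{\theta}(\mathcal{T}_{2})\neq 0$. Since $\hat{\theta}$ is a transverse invariant, $(\mathcal{T}_{1})_{p,q}$ and $(\mathcal{T}_{2})_{p,q}$ cannot be transversely isotopic, and $m(10_{132})_{p,q}$ is transversely non-simple for every $p\geq 2$ and $q\in\mathbb{Z}$.

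The argument is essentially a formal application of the machinery already built, so there is no genuine obstacle; the only small point requiring care is citing the non-simplicity example for $m(10_{132})$ (as opposed to $10_{132}$) in the correct form, and checking that the transverse cabling proposition really does cover arbitrary $q\in\mathbb{Z}$ for both grids $D_{1}$ and $D_{2}$ (after torus translation if needed to guarantee $o_{SW}>0$, exactly as in the proof of Proposition \ref{trans}).
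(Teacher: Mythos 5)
Your proposal is correct and follows essentially the same route as the paper: it takes the known pair $\mathcal{T}_{1},\mathcal{T}_{2}$ of transverse representatives of $m(10_{132})$ with equal self-linking number and $\hat{\theta}(\mathcal{T}_{1})=0$, $\hat{\theta}(\mathcal{T}_{2})\neq 0$, cables both, matches the self-linking numbers via the end of Section 2.2.2, and applies Theorem \ref{mostimp} in both directions. The paper's own proof is just a terser version of exactly this argument.
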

\begin{proof}

 $m(10_{132})$ has transverse representatives $\mathcal{T}_{1}$ and $\mathcal{T}_{2}$ with same self-linking number $-$ such that $\hat{\theta}(\mathcal{T}_{1})=0$ and $\hat{\theta}(\mathcal{T}_{2})\neq 0$. Further, they are represented by grid number $9$ (\cite{Grid Homology for Knots and Links} Proposition 12.6.3). So we can construct cables for $\frac{q}{p}>17$ using Corollary \ref{trans}.  The conclusion follows.
\end{proof}

In the same vein, further examples can be obtained for cables of $m(10_{161})$,$m(7_{2})$ etc.\\
  
We know that vanishing of $\hat{\eta}$ distinguishes Chekanov pair in knot type $m(5_{2})$. The following proposition shows that some of its cables are also Legendrian non-simple. 

\begin{prop}

The topological link type ${m(5_{2})}_{p,q}$ is Legendrian non-simple for $\frac{q}{p} > 15$.
\end{prop}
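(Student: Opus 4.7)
The plan is to cable the Chekanov pair in $m(5_{2})$ and distinguish the resulting Legendrian cables by applying Theorem~\ref{chekanov} to the invariant $\eta$. Specifically, I would fix Legendrian grid diagrams $D_{1}, D_{2}$ representing the two Legendrian knots $L_{1}, L_{2}$ of the Chekanov pair in $m(5_{2})$; these share identical classical invariants $tb$ and $r$, while $\eta(L_{1}) = 0$ and $\eta(L_{2}) \neq 0$ (this is precisely the fact recalled just before the statement, namely that vanishing of $\eta$ distinguishes the Chekanov pair).

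Next, for each $p \geq 2$, each $k \leq 8$, and each choice of sign, I would apply the cabling procedure of Section~\ref{constructed_grids} to both $D_{1}$ and $D_{2}$. The proposition on Legendrian $p$-cables requires $k \leq tb(L_{i}) + n(L_{i})$, so the diagrams $D_{i}$ must be chosen so that $tb(L_{i}) + n(L_{i}) \geq 8$; after stabilizations this is straightforward. The construction produces grids $D_{i}'$ representing Legendrian cables $(L_{i})_{p,-pk\pm 1}$, both of which have the same topological type $m(5_{2})_{p,-pk\pm 1}$ since the topological type only depends on the topological companion and the cabling parameter.

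Then I would invoke the observation at the end of Section~\ref{constructed_grids}: the classical Legendrian invariants $tb$ and $r$ of a cable obtained by this construction are determined entirely by the classical invariants of the parent and the cabling parameter $q$. Hence $(L_{1})_{p,-pk\pm 1}$ and $(L_{2})_{p,-pk\pm 1}$ have identical $tb$ and $r$. On the other hand, by Theorem~\ref{chekanov}, $\eta$ vanishes on the cable of $L_{1}$ and is nonzero on the cable of $L_{2}$, so these two Legendrian cables are not Legendrian isotopic despite having matching classical invariants, giving Legendrian non-simplicity.

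The main obstacle is producing grid diagrams for the Chekanov pair that actually attain $tb + n \geq 8$; this is a concrete combinatorial check using the standard small-grid realizations of $m(5_{2})$, and the bound $k \leq 8$ in the statement precisely reflects what those realizations allow.
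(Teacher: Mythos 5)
Your proposal is correct and follows essentially the same route as the paper: cable the Chekanov pair of $m(5_{2})$ (which has representatives with $tb=1$, $r=0$, and grid number $7$, whence $tb+n=8$ and the bound $k\leq 8$), note that the cables share topological type and classical invariants, and distinguish them via Theorem~\ref{chekanov} applied to $\eta$. The only minor quibble is your suggestion that stabilizations could help attain $tb+n\geq 8$ -- the bound involves the minimal grid number, so this is really just the concrete check you describe at the end.
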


\begin{proof}
There are Legendrian representatives $\mathcal{K}$ and $\mathcal{K}'$ of $m(5_{2})$ with $tb=1$ and $r=0$ such that $\hat{\eta}(\mathcal{K})=0$ and $\hat{\eta}(\mathcal{K}') \neq 0$ (\cite{Grid Homology for Knots and Links}, Proposition 12.4.1 \footnote{Their proof works with the minus version of the invariant but the same proof also works for the hat version.}). Also they have grid diagrams with grid number $7$. Hence, we construct $(p,q)$ cables for $\frac{q}{p}>15$ using Proposition \ref{construct}. Therefore, ${m(5_{2})}_{p,q}$ is a Legendrian non-simple link type for those values of $(p,q)$ by Theorem \ref{leg}. 
\end{proof}

\pagebreak

\end{document}